\newtheorem{theorem}{Theorem}
\newtheorem{lemma}[theorem]{Lemma}
\newtheorem{prop}[theorem]{Proposition}
\newtheorem{cor}[theorem]{Corollary}
\newtheorem{definition}[theorem]{Definition}
\theoremstyle{remark}
\newtheorem{example}[theorem]{Example}
\newtheorem{remark}[theorem]{Remark}
\newtheorem{problem*}{Problem}
\newtheorem{remark*}{Remark}
\newtheorem{convention*}{Convention}
\newtheorem{notation*}{Notation}
\newtheorem{examples*}{Examples}
\newtheorem{example*}{Example}
\newtheorem{warning*}{Warning}
\def\R{{\mathbb R}}
\def\Z{{\mathbb Z}}
\begin{document}
\title{Classification of Generalized Multiresolution Analyses}
\author[L.
 W.
 Baggett]{Lawrence~W.
~Baggett}
\address{Lawrence Baggett, Department of Mathematics, University of Colorado, Boulder, Colorado 80309, USA}
\email{baggett@euclid.colorado.edu}
\author[V.
 Furst]{Veronika~Furst}
\address{Veronika Furst, Department of Mathematics, Fort Lewis College, Durango, Colorado 81301, USA}
\email{furst\_v@fortlewis.edu}
\author[K.
 D.
 Merrill]{Kathy~D.
~Merrill}
\address{Kathy Merrill, Department of Mathematics, Colorado College, Colorado Springs, Colorado, 80903, USA}
\email{kmerrill@coloradocollege.edu}
\author[J.
 A.
 Packer]{Judith~A.
~Packer}
\address{Judith Packer, Department of Mathematics, University of Colorado, Boulder, Colorado 80309, USA}\email{packer@euclid.colorado.edu}
\begin{abstract}
We discuss how generalized multiresolution analyses (GMRAs), both classical and those defined
on abstract Hilbert spaces, can be classified by their multiplicity
functions $m$ and matrix-valued filter functions $H$.  Given a
natural number valued function $m$ and a system of functions encoded in a
matrix $H$ satisfying certain conditions, a construction procedure is
described that produces an abstract GMRA with multiplicity function $m
$ and filter system $H$. An equivalence relation on GMRAs is defined
and described in terms of their associated pairs $(m,H)$.  This
classification system is applied to classical examples in $L^2
(\mathbb R^d)$ as well as to previously studied abstract examples.
\end{abstract}

\thanks{This research was supported by the National Science Foundation through grant DMS-0701913. }

\maketitle
\section{Introduction}
\par A generalized multiresolution analysis (GMRA) is a Hilbert space structure traditionally associated with classical wavelets, that is, functions whose dilates of translates provide an orthonormal basis for $L^2(\mathbb R^d)$.
  Given a wavelet, the nested sequence of subspaces $V_j$ that result from taking only dilation powers less than $j$ are dense and have trivial intersection, with $V_{j+1}$ the dilate of $V_j$, and with $V_0$ invariant under translation.
  Such a structure is called a GMRA (\cite{BMM}), and was developed to understand such wavelets as the famous example given by Journe, whose $V_0$ space does not have an orthornormal basis given by translates of a single function called a scaling function.
  When $V_0$ has this stronger property, the nested sequence $\{V_j\}$ is called an MRA (\cite{Ma},\cite{Me}).
  Both MRAs and GMRAs have been extensively exploited to produce and understand wavelets, which in turn have proven useful for applications such as image and signal processing.
   
\par While wavelets and multiresolution structures were first studied in the Hilbert space $L^2(\mathbb R^d)$, analogous definitions make sense in other Hilbert spaces that have appropriate dilation and translation operators.
  Dutkay and Jorgensen (\cite{DJ}) pioneered the study of wavelets in function spaces on fractals, with later work by D'Andrea et al.(\cite{DMP}).
 Larsen, Raeburn and coworkers then showed that these and other interesting examples can be constructed via direct limits (\cite{IN}, \cite{IJKLN}, \cite{AIJLN}).  Dutkay et al.(\cite{BDP},\cite{DJ2}) constructed MRAs and super-wavelets in Hilbert spaces formed by direct sums of $L^2(\mathbb R^d)$ to orthonormalize examples such as the Cohen wavelet.    
  Tensor products of known examples lead to more exotic specimens (see Section \ref{examples}).    Our purpose in this paper is to construct a set of classifying parameters for GMRAs in order to unify and allow comparison of all these disparate examples.  We also provide an explicit construction of a canonical GMRA equivalent to each of them.
\par Accordingly, we will consider GMRA structures in an abstract  Hilbert space $\mathcal H$, equipped with ``translations" given by a unitary representation $\pi$ of a countable abelian group $\Gamma$ acting in $\mathcal H$, and a ``dilation" given by a unitary operator $\delta$.
   
We assume that these operators are related by 
\begin{equation}
\label{piconj}
\delta^{-1}\pi_\gamma \delta = \pi_{\alpha(\gamma)}
\end{equation}
for all $\gamma\in \Gamma,$ where $\alpha$ is an isomorphism of $\Gamma$ into itself such 
that the index of $\alpha(\Gamma)$ in $\Gamma$ equals $N>1,$ and such that $\cap \alpha^n(\Gamma) = \{0\}.
$  These definitions generalize the classical case of ordinary translation by the integer lattice in $L^2(\mathbb R^d)$, given by $\pi_n f(x)=f(x-n)$, and dilation by an expansive integer matrix $A$, given by $\delta f(x)=\sqrt{|\det A|}f(Ax)$.
\par The structure of a GMRA, and thus the parameters that uniquely identify it, are revealed via Stone's Theorem on unitary representations of abelian groups.
  Using this theorem, we know that the representation $\pi$ restricted to $V_0$ is completely determined by a measure $\mu$ on the dual group $\widehat\Gamma$  and a Borel multiplicity function $m:\widehat\Gamma\to \{0,1,2,\cdots,\infty\}$, which essentially describes how many times each character occurs in the decomposition of $\pi|_{V_0}$.
  There is a unitary equivalence $J$ between the action of $\pi$ on $V_0$ and multiplication by characters on 
$\oplus L^2(\sigma_i),$ where $\sigma_i=\{\omega:m(\omega)\geq i\}$.  Because of this, we think of $J$ as a partial alternative Fourier transform.  For simplicity, in this paper we will restrict our attention to the commonly studied case where $\mu$ is Haar measure, and $m$ is finite a.e..

\par The multiplicity function $m$ is one of the parameters that determine a GMRA.
  As we will see in Section \ref{equivalence}, the other parameter 
is a ``filter''that shows how the operator $J$ interacts with dilation.
  Classical filters were periodic functions $h$ and $g$ in $L^2(\mathbb R^d)$ that described inverse dilates of Fourier transforms of bases of $V_1$ in terms of those of $V_0$.
  Starting with an MRA in $L^2(\mathbb R^d)$, such functions could be shown to satisfy certain orthogonality relations.
  Mallat, Meyer and Daubechies (\cite{Ma},\cite{Me},\cite{Da}) turned this process around by using functions $h$ and $g$ satisfying orthogonality together with additional low-pass and non-vanishing conditions to construct MRAs and wavelets.
  Lawton (\cite{Law}) and Bratelli/Jorgensen (\cite{BJ2}) were able to relax the non-vanishing condition by allowing Parseval frames in place of orthonormal bases, and Baggett, Courter, Jorgensen, Merrill, Packer (\cite{BCM},\cite{BJMP}) generalized this work to the GMRA setting by replacing $h$ and $g$ by matrix-valued functions $H$ and $G$.  
  In \cite{BJ}, Bratelli and Jorgensen related filters $h$ and $g$ to Ruelle operators $S_h$ and $S_g$, which satisfy relations similar to those of Cuntz operators, and can be used to represent inverse dilations.  This work was extended to generalized filters in
\cite{BJMP} and later \cite{IJKLN}.   
\par In the next section, we recall the relationship between abstract GMRAs, multiplicity functions and generalized filters.
   In particular, we describe conditions on a multiplicity function $m$ and a filter $H$ that guarantee that they will produce a GMRA.
  It turns out that these conditions are considerably more relaxed in an abstract Hilbert space than in $L^2(\mathbb R^d)$.
 In Section \ref{construction} we describe a construction procedure that produces an abstract
GMRA from any $m$ and $H$ meeting the required conditions.  This construction gives an explicit realization of the abstract direct limit GMRAs built in \cite{IJKLN}.  While the procedure relies on first choosing a filter $G$ complementary to $H$, we show in Section \ref{equivalence} that the equivalence between GMRAs does not depend on the choice of $G$. Thus, the classifying set described there depends only on the pair
$m$ and $H$.  In this section, we also give a necessary and sufficient
condition on the equivalence class of a filter so that $S_H$ is a pure
isometry and thus associated with a GMRA, in the case of a finite multiplicity function.  We conclude in
Section \ref{examples} with a variety of examples that illustrate our
main theorems, including an example of a GMRA where the translation
group is not isomorphic to $\mathbb Z^d$.
\section{GMRAs, multiplicity functions and filters}
\label{gmras}
Let $\mathcal H$ be an abstract, separable Hilbert space, equipped with operators $\pi_{\gamma}$ and $\delta$ satisfying Equation (\ref{piconj}).
\begin{definition}
A collection $\{V_j\}_{-\infty}^\infty$ of closed subspaces of $\mathcal H$
is called a {\bf generalized multiresolution analysis} (GMRA) relative to $\pi$ and $\delta$ if
\begin{enumerate}
\item\hskip2em $V_j\subseteq V_{j+1}$ for all $j.
$
\item\hskip2em $V_{j+1}=\delta(V_j)$ for all $j.
$
\item\hskip2em $\cap V_j=\{0\},$ and $\cup V_j$ is dense in $\mathcal H.
$
\item\hskip2em $V_0$ is invariant under the representation $\pi.
$
\end{enumerate}
The subspace $V_0$ is called the {\bf core subspace} of the GMRA $\{V_j\}.
$
\end{definition}
Let $\{V_j\}$ be a GMRA in a Hilbert space $\mathcal H.
$
For each $j,$ write $W_j$ for the orthogonal complement to
$V_j$ in $V_{j+1}.
$
It follows that $\mathcal H = \bigoplus_{j=-\infty}^\infty W_j.
$
Also, for each $j\geq 0,$ $W_j$ is an invariant subspace for the representation $\pi.
$
We apply Stone's Theorem on unitary representations of abelian groups
to the subrepresentations of $\pi$ acting in $V_0$ and $W_0.
$
Accordingly, there exists a
finite, Borel measure $\mu$ (unique up to equivalence of measures) on $\widehat\Gamma,$
Borel subsets $\sigma_1\supseteq \sigma_2 \supseteq \ldots$ of $\widehat\Gamma$ (unique up to sets of $\mu$ measure 0),
and a (not necessarily unique) unitary operator 
$J:V_0 \to \bigoplus_i L^2(\sigma_i,\mu)$ satisfying
\[
[J(\pi_{\gamma}(f))](\omega) = \omega(\gamma) [J(f)](\omega)
\]
for all $\gamma\in\Gamma,$ all $f\in V_0,$ and $\mu$ almost all $\omega\in\widehat\Gamma.
$
We write $m$ for the function on $\widehat\Gamma$ given by
$m(\omega) = \sum_i \chi_{\sigma_i}(\omega),$ and call it
the {\bf multiplicity function} associated to the representation $\pi|_{V_0}.$
\par
Analogously,
there exists a finite, Borel measure $\widetilde\mu,$ Borel subsets $\widetilde\sigma_k,$ and an operator
$\widetilde J:W_0\to \bigoplus_k L^2(\widetilde\sigma_k,\widetilde\mu)$ satisfying
\[
[\widetilde J(\widetilde\pi_\gamma(f))](\omega) = \omega(\gamma) [\widetilde J(f)](\omega)
\]
for all $\gamma\in\Gamma,$ $f\in W_0,$ and $\widetilde\mu$ almost all $\omega.
$
We write $\widetilde m$ for the function on $\widehat\Gamma$ given by
$\widetilde m(\omega) = \sum_k \chi_{\widetilde\sigma_k}(\omega),$ and call it
the multiplicity function associated to the representation $\pi|_{W_0}.$
\par
In this paper, we will assume that the measures $\mu$ and $\widetilde\mu$ are absolutely continuous with respect to Haar measure,
and thus take $\mu$ and $\widetilde\mu$ to be the restrictions of Haar measure to the subsets $\sigma_1$
and  $\widetilde\sigma_1,$ respectively.
We also assume that the multiplicity function $m$ associated to the representation
$\pi|_{V_0}$ is finite almost everywhere.
\par
Let $\alpha^*$ be the dual endomorphism of $\widehat\Gamma$ onto itself defined by
$[\alpha^*(\omega)](\gamma) = \omega(\alpha(\gamma)),$ and note that
the kernel of $\alpha^*$ contains exactly $N$ elements
and that $\alpha^*$ is ergodic with respect to the Haar measure $\mu$ on $\widehat\Gamma.
$
Using $\alpha^*$ to relate the representations $\pi|_{V_1}$ and  $\pi|_{V_0}$, it is shown in \cite{BMM} and more generally in \cite{IJKLN} that multiplicity functions for a GMRA must satisfy the following consistency equation:
\begin{equation}
\label{consistency}
m(\omega) +  \widetilde m(\omega) = \sum_{\alpha^*(\zeta)=\omega} m(\zeta) .
 \end{equation}
It follows, since the function $m$ is finite a.e., that 
the sets $\sigma_i$ and $\widetilde{\sigma}_k$ are completely determined by the multiplicity function $m.$
It also follows that a multiplicity function $m$ associated with a GMRA must satisfy the consistency inequality:
\begin{equation}
\label{conineq}
m(\omega) \leq \sum_{\alpha^*(\zeta)=\omega} m(\zeta).
 \end{equation}

 \par We will see in the next section that the consistency inequality is a sufficient as well as necessary condition for a function  $m:\widehat\Gamma \to \{0,1,2,\ldots\}$ 
to be a multiplicity function associated to an abstract GMRA.
Accordingly, we make the following definition.
\begin{definition}
\label{mult}
A {\bf multiplicity function} is a Borel function $m:\widehat\Gamma\to \{0,1,2,\ldots\}$ that
satisfies the consistency inequality (\ref{conineq}). 
\end{definition}
  In contrast, Bownik, Rzeszotnik and Speegle (\cite{BRS}) and Baggett and Merrill (\cite{BM}) showed that an additional technical condition related to dilates of the translates of the support of $m$ is required for $m$ to be a multiplicity function for a GMRA in $L^2(\mathbb R^d)$.
 We will need the following observation about multiplicity functions.  
  
\begin{prop}\label{strictinequality}
Suppose $m:\widehat\Gamma \to \{0,1,2,\ldots\}$ satisfies the consistency inequality.
If $m$ is not identically 0, then there exists a set
$F$ of positive measure in $\widehat\Gamma$ such that
\[
m(\omega) < \sum_{\alpha^*(\zeta)=\omega} m(\zeta)
\]
for all $\omega\in F.
$  That is, the consistency inequality
is a strict inequality on a set of positive measure.
\end{prop}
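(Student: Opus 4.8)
The plan is to argue by contradiction. Suppose the consistency inequality (\ref{conineq}) fails to be strict off a null set, i.e.\ $m(\omega) = \sum_{\alpha^*(\zeta)=\omega} m(\zeta)$ for $\mu$-a.e.\ $\omega$; write $(\mathcal{L}m)(\omega)$ for the right-hand side. The quickest special case is when $m$ is integrable: since $\alpha^*$ is a surjective endomorphism of $\widehat\Gamma$ it preserves Haar measure and is $N$-to-one, so $\int \mathcal{L}m\, d\mu = N\int m\,d\mu$; combined with $\mathcal{L}m = m$ a.e.\ and $N>1$ this gives $\int m\,d\mu = 0$, hence $m = 0$ a.e. I expect the main obstacle to be that $m$ is only assumed finite a.e., so $\int m\,d\mu$ may be infinite and this one-line argument does not apply. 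My plan is therefore to circumvent the integration entirely and exploit ergodicity of $\alpha^*$ together with the a.e.\ finiteness of $m$.

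First I would show that the zero set $A = \{m = 0\}$ is $\alpha^*$-invariant modulo null sets. On the conull set where $\mathcal{L}m = m$, if $m(\omega)=0$ then $\sum_{\alpha^*(\zeta)=\omega} m(\zeta)=0$, so, as $m\ge 0$, every $\alpha^*$-preimage of $\omega$ already lies in $A$; hence $\alpha^{*-1}(A)$ agrees with $A$ up to null sets, by a comparison of measures using that $\alpha^*$ preserves $\mu$. Ergodicity of $\alpha^*$ then forces $\mu(A)\in\{0,1\}$. The case $\mu(A)=1$ says $m = 0$ a.e., which is excluded by the hypothesis that $m$ is not the zero function (I am reading ``not identically $0$'' in the a.e.\ sense dictated by the paper's conventions, under which $\sigma_1$, and hence $m$, is defined only up to null sets). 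So $\mu(A)=0$, i.e.\ $m \ge 1$ a.e.

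Finally I would run a bootstrap. A surjective endomorphism with kernel of order $N$ carries $\mu$-null sets to $\mu$-null sets, because for a null set $E$ the set $\alpha^{*-1}(\alpha^*(E)) = E + \ker\alpha^*$ is a finite union of translates of $E$, hence null, and $\alpha^*$ is measure preserving. Consequently, whenever $m \ge c$ a.e.\ the set $\alpha^*(\{m<c\})$ is null, so for a.e.\ $\omega$ all $N$ preimages $\zeta$ of $\omega$ satisfy $m(\zeta)\ge c$, and therefore $m(\omega) = (\mathcal{L}m)(\omega) = \sum_{\alpha^*(\zeta)=\omega} m(\zeta) \ge Nc$ for a.e.\ $\omega$. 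Iterating from $m\ge 1$ a.e.\ (the conclusion of the previous step) yields $m\ge N^j$ a.e.\ for every $j\ge 0$, and since $N>1$ this makes $m$ infinite a.e., contradicting the standing hypothesis that $m$ is finite a.e. This contradiction shows that the consistency inequality must be strict on a set $F$ of positive measure, as claimed. The delicate points are the two ``modulo null set'' bookkeeping steps and the repeated use of the fact that $\alpha^*$ preserves null sets in both directions; everything else is routine.
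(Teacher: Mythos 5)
Your proof is correct, but it runs in the opposite direction from the paper's. The paper also argues by contradiction from a.e.\ equality, but its mechanism is: iterate the equality to $m(\omega)=\sum_{{\alpha^*}^n(\zeta)=\omega}m(\zeta)$; use a.e.\ finiteness to find a set $E$ of positive measure on which $m\le k$; choose $n$ with $N^n>k$ and pigeonhole over the $N^n$ preimages to produce a positive-measure set $F$ on which $m=0$; then observe that $m({\alpha^*}^n(\omega))$ is nondecreasing in $n$ and invoke ergodicity in the form ``a.e.\ orbit visits $F$ infinitely often'' to conclude $m=0$ a.e. You instead apply ergodicity at the outset in its invariant-set formulation --- the zero set $A=\{m=0\}$ satisfies ${\alpha^*}^{-1}(A)=A$ modulo null sets, so $\mu(A)\in\{0,1\}$ --- and then push the lower bound upward, $m\ge c$ a.e.\ implying $m\ge Nc$ a.e., to collide with a.e.\ finiteness. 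So the paper goes ``finiteness forces zeros, ergodicity spreads them''; you go ``ergodicity forces $m\ge 1$ a.e., the consistency equality inflates this geometrically, finiteness is violated.'' Your route is arguably cleaner in that it uses only the basic definition of ergodicity rather than the recurrence consequence, at the modest cost of the forward-image lemma (that $\alpha^*$ maps null sets to null sets, which you justify correctly via ${\alpha^*}^{-1}(\alpha^*(E))=E\cdot\ker\alpha^*$); your opening observation that the integrable case is a one-line computation with the transfer operator is also a nice touch that the paper does not make. The only point worth tightening is the measurability of $\alpha^*(\{m<c\})$, which you can sidestep entirely by writing the set of $\omega$ with a bad preimage as $\bigcup_{\zeta\in\ker\alpha^*}\{\omega: m(\zeta^{-1}c(\omega))<c\}$ using the Borel cross-section $c$ from Section \ref{construction}.
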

\begin{proof} Suppose 
\[
m(\omega) = \sum_{\alpha^*(\zeta)=\omega} m(\zeta)
\]
for almost all $\omega\in\widehat\Gamma.
$
It follows directly by induction that
\[
m(\omega) = \sum_{{\alpha^*}^n(\zeta)=\omega} m(\zeta)
\]
for almost all $\omega.
$
Let $k$ be a positive integer for which there exists a set $E\subseteq\widehat\Gamma$ of positive measure
such that $m(\omega)\leq k$ for all $\omega\in E.
$
Choose $n$ such that $N^n>k.
$  Then, for almost every $\omega\in {\alpha^*}^{-n}(E),$ we have
\[
\sum_{\zeta\in\ker({\alpha^*}^n)} m(\omega\zeta) = m({\alpha^*}^n(\omega)) \leq k,
\]
implying that there exists some $\zeta\in\ker({\alpha^*}^n),$
and a subset $E'\subseteq {\alpha^*}^{-n}(E)$ of positive measure, such that
$m(\omega\zeta)=0$ for all $\omega\in E'.
$
Hence, $m(\omega)=0$ on a set $F$ of positive measure.
But, from the equation
\[
m(\alpha^*(\omega)) = \sum_{\alpha^*(\zeta)=1} m(\omega\zeta),
\]
it follows that the sequence $\{m({\alpha^*}^n(\omega))\}$ is nondecreasing.
Because $\alpha^*$ is ergodic,
we must have that the sequence $\{{\alpha^*}^n(\omega)\}$ intersects the set $F$
infinitely often for almost all $\omega.
$
Hence $m(\omega)=0\mbox{ a.e.}.$
\end{proof}
   
The other ingredients we will need for our GMRA construction are filters, which are defined in terms of a multiplicity function $m$ as follows: 
\begin{definition}
Let $m$ be a multiplicity function, and write
$\sigma_i = \{\omega: m(\omega) \geq i\}.$
Set
\[
\widetilde m(\omega) = \sum_{\alpha^*(\zeta)=\omega} m(\zeta) - m(\omega),
\]
and set $\widetilde\sigma_k = \{\omega : \widetilde m(\omega) \geq k\}.$
Let  $H=[h_{i,j}]$ and $G=[g_{k,j}]$ be (possibly infinite) matrices  of Borel, complex-valued functions
on $\widehat\Gamma$ such that for every $j$, $h_{i,j}$ and $g_{k,j}$ are supported in $\sigma_j.
$ Suppose further that $H$ and $G$ satisfy the following ``filter equations:''
\begin{equation}\label{filter1}
\sum_{\alpha^*(\zeta)=\omega} \sum_j h_{i,j}(\zeta)\overline{h_{i',j}(\zeta)}
= N\delta_{i,i'}\chi_{\sigma_i}(\omega),
\end{equation}
\begin{equation}\label{filter2}
\sum_{\alpha^*(\zeta)=\omega} \sum_j g_{k,j}(\zeta)\overline{g_{k',j}(\zeta)}
= N\delta_{k,k'}\chi_{\widetilde{\sigma}_k}(\omega), \mbox{ and}
\end{equation}
\begin{equation}
\label{filter3}
\sum_{\alpha^*(\zeta)=\omega} \sum_j g_{k,j}(\zeta) \overline{h_{i,j}(\zeta)}
=0.
\end{equation}
Then $H$ is called a {\bf filter} relative to $m$ and $\alpha^*$,
and $G$ is called a {\bf complementary filter} to $H.$
\end{definition}
\par
We note that it will sometimes be useful to consider filters and complementary filters to be matrix valued functions on $\widehat{\Gamma}$ rather than a matrix of complex valued functions.
  It is then a consequence of the definition above that the nonzero portion of the matrix $H(\omega)$ is contained in the upper left block of dimensions $m(\alpha^*(\omega))\times m(\omega)$, while the nonzero portion of $G(\omega)$ is contained in the upper left block of dimensions $\widetilde m(\alpha^*(\omega))\times m(\omega)$.
\par
Given a filter $H$ relative to $m$ and $\alpha^*,$
we may define a ``Ruelle'' operator $S_H$ on
$\bigoplus_i L^2(\sigma_i)$ by
\[
[S_H(f)](\omega) = H^t(\omega) f(\alpha^*(\omega)).
\]
Similarly, a complementary filter $G$ defines a Ruelle operator $S_G$ from 
$\bigoplus_i L^2(\widetilde{\sigma}_i)$ to $\bigoplus_i L^2(\sigma_i)$ by
\[
[S_G(f)](\omega) = G^t(\omega) f(\alpha^*(\omega)).
\]
The filter equations satisfied by $H$ and $G$ translate to the following Cuntz-like conditions for the Ruelle operators (see \cite{BJMP},\cite{IJKLN}):
\begin{lemma}
\label{Ruelle}
If $H$ is a filter relative to $m$ and $\alpha^*$, and $G$ is a complementary filter relative to $\widetilde m$ and $H,$ then the Ruelle operators they define satisfy
\begin{enumerate}
\item$ S_H^*S_H = I$, $S_G^*S_G = \widetilde I$
\item $S_H^*S_G =0,$ and
\item $S_HS_H^* + S_GS_G^*=I,$
\end{enumerate}
where $I$ is the identity operator on $\bigoplus_i L^2(\sigma_i,\mu)$
and $\widetilde I$ is the identity operator on $\bigoplus_k L^2(\widetilde\sigma_k,\widetilde\mu).
$
\end{lemma}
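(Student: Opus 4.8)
The plan is to write down the adjoints $S_H^*$ and $S_G^*$ explicitly, deduce (1) and (2) by substituting the filter equations (\ref{filter1})--(\ref{filter3}) directly, and obtain (3) from the consistency equation by a fiberwise unitarity argument. The only ingredient beyond routine computation is the observation that makes the fiberwise matrix built from $H$ and $G$ \emph{square}.

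I would first record the averaging formula $\int_{\widehat\Gamma}\phi\,d\mu=\frac1N\int_{\widehat\Gamma}\sum_{\alpha^*(\zeta)=\omega}\phi(\zeta)\,d\mu(\omega)$, valid because $\alpha^*$ is $N$-to-$1$ and pushes Haar measure forward to Haar measure. Pairing $S_Hf$ with $g$ in $\bigoplus_iL^2(\sigma_i)$, applying this formula, and using that $f(\alpha^*(\zeta))=f(\omega)$ whenever $\alpha^*(\zeta)=\omega$, gives
\[
[S_H^*(g)](\omega)=\frac1N\sum_{\alpha^*(\zeta)=\omega}\overline{H(\zeta)}\,g(\zeta),
\]
and the analogous formula for $S_G^*$. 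Then $[S_H^*S_H(f)](\omega)=\frac1N\bigl(\sum_{\alpha^*(\zeta)=\omega}\overline{H(\zeta)}H^t(\zeta)\bigr)f(\omega)$, and the $(i,i')$ entry of the matrix in parentheses is $\sum_{\alpha^*(\zeta)=\omega}\sum_j\overline{h_{i,j}(\zeta)}h_{i',j}(\zeta)=N\delta_{i,i'}\chi_{\sigma_i}(\omega)$ by (\ref{filter1}); since the $i$th component of any $f\in\bigoplus_iL^2(\sigma_i)$ is supported in $\sigma_i$, this says $S_H^*S_H=I$. The same computation with (\ref{filter2}) gives $S_G^*S_G=\widetilde I$, and with (\ref{filter3}) gives $S_H^*S_G=0$, establishing (1) and (2).

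For (3), fix $\omega_0$ outside a null set so that $m(\omega_0)$ and the values $m(\zeta)$ over the $N$-point fiber $(\alpha^*)^{-1}(\omega_0)=\{\zeta_1,\dots,\zeta_N\}$ are all finite, hence so is $\widetilde m(\omega_0)$. Using the block structure noted after the definition of a filter (only the upper-left $m(\alpha^*(\cdot))\times m(\cdot)$ block of $H$, and the upper-left $\widetilde m(\alpha^*(\cdot))\times m(\cdot)$ block of $G$, can be nonzero), I would form
\[
U(\omega_0)=\frac1{\sqrt N}\begin{pmatrix}H(\zeta_1)&\cdots&H(\zeta_N)\\ G(\zeta_1)&\cdots&G(\zeta_N)\end{pmatrix},
\]
which has $m(\omega_0)+\widetilde m(\omega_0)$ rows and $\sum_{l=1}^Nm(\zeta_l)$ columns. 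The defining relation $\widetilde m(\omega_0)=\sum_{\alpha^*(\zeta)=\omega_0}m(\zeta)-m(\omega_0)$ says exactly that these two counts agree, so $U(\omega_0)$ is square; and equations (\ref{filter1})--(\ref{filter3}) say exactly that its rows are orthonormal, i.e.\ $U(\omega_0)U(\omega_0)^*=I$. A square matrix with a one-sided inverse is unitary, so also $U(\omega_0)^*U(\omega_0)=I$ --- the columns of $U(\omega_0)$ are orthonormal. Reading this column orthonormality back through $[S_HS_H^*f](\omega)=H^t(\omega)[S_H^*f](\alpha^*(\omega))$ and the corresponding formula for $S_G$ yields $[(S_HS_H^*+S_GS_G^*)f](\omega)=f(\omega)$. (Equivalently, by (1) and (2) the operator $S_HS_H^*+S_GS_G^*$ is the orthogonal projection onto $\overline{\range S_H}\oplus\overline{\range S_G}$, so (3) amounts to $\ker S_H^*\cap\ker S_G^*=\{0\}$; but $S_H^*f=S_G^*f=0$ says precisely that $\overline{U(\omega_0)}$ kills the vector obtained by stacking $f(\zeta_1),\dots,f(\zeta_N)$, and invertibility of $U(\omega_0)$ forces $f=0$ a.e.)

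I expect the only real obstacle to be the index bookkeeping in (3): keeping straight the row and column labels, the block sizes $m(\omega_0)$, $\widetilde m(\omega_0)$, $m(\zeta_l)$, and the transposes and complex conjugates. Conceptually there is nothing subtle --- the filter equations make $U(\omega_0)$ an isometry between finite-dimensional spaces that the consistency equation forces to have equal dimension, hence a unitary, and items (1)--(3) are just the four-block decomposition of the two identities $U(\omega_0)U(\omega_0)^*=I$ and $U(\omega_0)^*U(\omega_0)=I$.
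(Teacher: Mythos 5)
Your argument is correct and complete. Note that the paper itself does not prove this lemma at all --- it states it as a translation of the filter equations and refers the reader to \cite{BJMP} and \cite{IJKLN} --- so there is no in-paper proof to compare against; what you have written is the standard argument from those references, and all the computations check out. Parts (1) and (2) are exactly the routine adjoint-plus-averaging computation you describe, and your formula $[S_H^*(g)](\omega)=\frac1N\sum_{\alpha^*(\zeta)=\omega}\overline{H(\zeta)}\,g(\zeta)$ is right (one should also observe, as follows from (\ref{filter1}) with $i=i'$, that the $i$th component of $S_H^*g$ is automatically supported in $\sigma_i$, so the output lands in $\bigoplus_i L^2(\sigma_i)$). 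For part (3), your identification of the key point is the correct one: the consistency equation makes the fiberwise matrix $U(\omega_0)$ square, the filter equations make its rows orthonormal, and finite dimensionality upgrades the coisometry to a unitary. It is worth emphasizing that this last step is precisely where the standing hypothesis that $m$ is finite a.e.\ is used --- for an infinite matrix, $UU^*=I$ would not imply $U^*U=I$, and indeed (3) is the one assertion of the lemma that genuinely needs finiteness rather than just the filter equations. The only cosmetic caveat is that the fiber points $\zeta_1,\dots,\zeta_N$ need not be enumerated measurably; since the identity $[(S_HS_H^*+S_GS_G^*)f](\omega)=f(\omega)$ is verified pointwise for a.e.\ $\omega$, no measurable selection is actually required.
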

\par
Filters, like multiplicity functions, arise naturally out of GMRAs.
  Let $\{V_j\}$ be a GMRA (with finite multiplicity function and associated measure absolutely continuous with respect to Haar),
and let $\mu,\{\sigma_i\},$ $J$, $\widetilde\mu,\{\widetilde\sigma_k\},$ and $\widetilde J$ be as in the Stone's Theorem discussion 
above.
Write $C_i$ for the element of the direct sum space
$\bigoplus_j L^2(\sigma_j,\mu)$ whose $i$th coordinate is $\chi_{\sigma_i}$
and whose other coordinates are 0, and $\widetilde C_k$ for the element in
$\bigoplus_l L^2(\widetilde\sigma_l,\widetilde\mu)$ whose $k$th coordinate is
$\chi_{\widetilde\sigma_k}$ and whose other coordinates are 0.
Let  $\bigoplus_j h_{i,j}$ be the element $J(\delta^{-1}(J^{-1}(C_i)))$ and $\bigoplus_j g_{k,j}$ be the element
$J(\delta^{-1}({\widetilde J}^{-1}(\widetilde C_k))),$ both in $\bigoplus_j L^2(\sigma_j,\mu).
$
It was shown in \cite{BFMP1}  that the matrix $H=[h_{i,j}]$ is then a filter relative to $m$ and $\alpha^*,$ 
and the matrix  $G = [g_{k,j}]$ is a complementary filter to $H.$  We call these {\bf filters constructed from a GMRA}, and note that they are not unique, but rather depend on the choice of the maps $J$ and $\widetilde J$.  
The operators $J\circ \delta^{-1}  \circ J^{-1}$  and  $J\circ \delta^{-1} \circ {\widetilde J}^{-1}$ 
are the corresponding Ruelle operators $S_H$ and $S_G$ respectively.
   It follows directly from their definitions that $S_H$ and $S_G$ are isometries, and the GMRA requirement that $\cap V_j=\{0\}$ implies that $S_H = J \circ \delta^{-1} \circ J^{-1}$ is a pure isometry.
   
Just as with multiplicity functions, this necessary condition on a filter to be associated with a GMRA turns out to be sufficient as well.
  In Theorem 5 of \cite{IJKLN}, it is shown that if $S_H$ is a pure isometry on a Hilbert space $\bigoplus L^2(\sigma_i)$, then it is possible to construct a generalized multiresolution analysis via a direct limit process.
    Our construction in the next section will give a concrete realization under the same hypotheses.
Again, as with multiplicity functions, we see that this necessary and sufficient condition on the filter $H$ is much weaker than what is required for a filter to be associated with a GMRA in $L^2(\mathbb R^d)$.
  For example, in that context, the ``refinement equation," 
\begin{equation}
\label{refine}
\widehat{\phi}(\omega)=\frac1{\sqrt{|\det A|}}H({A^t}^{-1}\omega)\widehat{\phi}({A^t}^{-1}\omega),
\end{equation}
 suggests some sort of convergence of the infinite product $\Pi_{j=1}^{\infty}\frac1{\sqrt{|\det A|}}H((A^t)^{-j}\omega),$ which in turn requires that the filter $H$ satisfy some low-pass condition of being close to $\sqrt{|\det A|}$ times a partial identity near the origin (\cite{BJMP},\cite{IJKLN}).
  Theorems from \cite{BFMP1} and \cite{AIJLN} indicate that in the abstract setting, a much weaker condition is sufficient to guarantee that $S_H$ is a pure isometry.   In the case where the matrix $H$ is $1\times 1$, the simple condition that $|H(\omega)|\neq 1$ on a set of positive measure is sufficient to show that $S_H$ is a pure isometry (\cite{BJ}, \cite{AIJLN}).  
In particular, filters traditionally labeled ``high-pass" can be used as $H$.  Theorem \ref{pure} in Section \ref{equivalence} of this paper gives a new, more general result of this type.  
\section{Explicit Construction of GMRAs on abstract Hilbert spaces}
\label{construction}
Let $m$ be a multiplicity function on $\widehat\Gamma,$ as in Definition \ref{mult}
and let $H$ be a filter relative to $m$ and $\alpha^*.$  Using Proposition \ref{strictinequality}, 
define
\begin{equation}
\label{mtilde}
\widetilde m(\omega) = \sum_{\alpha^*(\zeta)=\omega} m(\zeta) - m(\omega),
\end{equation}
and define the sets $\{\sigma_i\}$ and $\{\widetilde\sigma_k\}$ as in the preceding section.  As is shown in \cite{BCM}, given a filter $H$ relative to $m$ and $\alpha^*$, there always exists a complementary filter $G$.  For the purposes of this construction, let $G$ be any filter complementary to $H$.  
\par
Because a $J$ map guaranteed by Stone's theorem takes the core subspace $V_0$ of any GMRA to $\bigoplus L^2(\sigma_i)$, this direct sum of $L^2$ spaces is a natural candidate for the core subspace of an abstract GMRA built out of a multiplicity function and a filter.
The group  $\Gamma$ acts on this space in a natural way via multiplication by characters.
  Similarly, the space $\bigoplus L^2(\widetilde{\sigma}_k)$ is an obvious candidate for the abstract $W_0=V_1\ominus V_0$,  and the relationships  $J\circ \delta^{-1}  \circ J^{-1}=S_H$  and  $J\circ \delta^{-1} \circ {\widetilde J}^{-1}=S_G$ suggest that 
the Ruelle operators $S_H$ and $S_G$ provide natural abstract inverse dilations on these spaces.
 Thus the main task remaining in building a GMRA given $m$ and $H$ is to describe positive dilates of $W_0$; such subspaces could then be used to fill out the rest of the Hilbert space.
\par
If the group $\Gamma=\mathbb Z^d,$ then embedding $\widehat{\Gamma}=\mathbb T^d$ as $[-\frac 12,\frac 12]^d$ in $\mathbb R^d$ provides us with a simple candidate for the dilate of our constructed $W_0=\bigoplus L^2(\widetilde{\sigma}_k)$.
  Since in this case, $\alpha$ is an isomorphism of $\mathbb Z^d$, we must have $\alpha(n)=An$, for a matrix $A$.
  We define positive dilations $\mathcal D^j:\bigoplus L^2(\widetilde{\sigma}_k)\to\bigoplus L^2(A^t\widetilde{\sigma_k})$ by $$\mathcal D^j\left(\bigoplus_k f_k(\omega)\right)=\bigoplus_k \frac1{\sqrt{|\det A|}^j} f_k((A^t)^{-j}\omega).
$$
\par For more general $\Gamma$, we will use an abstract construction to define the positive dilation $\mathcal D$ in terms of a cross section for the map $\alpha^*$.
  Just as in the case of $\Gamma=\mathbb Z^d$, our dilated space will be a direct sum of $L^2$ spaces such that the map $f\to \sqrt{N}f\circ \alpha^*$ determines an isometry of the dilated
space onto the original one.
\par
Let $c$ be a Borel cross-section for the map $\alpha^*;$ i.e., $c$ is a Borel map from $\widehat\Gamma$ into $\widehat\Gamma$ for which
$\alpha^*(c(\omega)) = \omega$ for all $\omega\in\widehat\Gamma.
$
Define $\tau:\widehat\Gamma\to \ker(\alpha^*)$ by
\[
\tau(\omega) = c(\alpha^*(\omega))\omega^{-1}.
\]
\par
Now, let $\nu$ be a finite Borel measure on $\widehat\Gamma.
$
Let $E$ be a Borel subset of $\widehat\Gamma,$
let $\zeta$ be an element of the kernel of $\alpha^*,$
and set
\[
E_{\zeta} = \{\omega\in E:\tau(\omega)=\zeta\}.
\]
\begin{prop}
The set $E$ is the disjoint union $\cup_\zeta E_\zeta,$ and $\alpha^*$ is 1-1
on each $E_\zeta$ into $\widehat\Gamma.
$
\end{prop}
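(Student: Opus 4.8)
The plan is to read everything off the single identity $\tau(\omega) = c(\alpha^*(\omega))\,\omega^{-1}$ that defines $\tau$, together with the two structural facts already in hand: that $\alpha^*$ is a group homomorphism of $\widehat\Gamma$ onto itself, and that $c$ is a section of $\alpha^*$, i.e.\ $\alpha^*(c(\omega)) = \omega$ for every $\omega$.

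First I would verify that $\tau$ genuinely takes its values in $\ker(\alpha^*)$, so that the sets $E_\zeta$ really do exhaust $E$ as $\zeta$ runs over $\ker(\alpha^*)$: applying the homomorphism $\alpha^*$ and using $\alpha^*\circ c = \mathrm{id}$ gives
$\alpha^*(\tau(\omega)) = \alpha^*(c(\alpha^*(\omega)))\,\alpha^*(\omega)^{-1} = \alpha^*(\omega)\,\alpha^*(\omega)^{-1} = 1$.
One also notes here that $\tau$ is Borel — $c$ is Borel, $\alpha^*$ is continuous, and the group operations on $\widehat\Gamma$ are continuous — so each $E_\zeta$ is a Borel subset of $E$. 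Since, for each $\omega\in E$, the quantity $\tau(\omega)$ is a single well-defined element of $\ker(\alpha^*)$, the point $\omega$ lies in $E_\zeta$ for exactly one $\zeta$, namely $\zeta = \tau(\omega)$. Hence $E = \bigsqcup_{\zeta\in\ker(\alpha^*)} E_\zeta$ is a disjoint union, indexed by the finite ($N$-element) group $\ker(\alpha^*)$.

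For the injectivity statement I would simply solve the defining identity for $\omega$. On $E_\zeta$ we have $c(\alpha^*(\omega))\,\omega^{-1} = \zeta$, and since $\widehat\Gamma$ is abelian this rearranges to $\omega = c(\alpha^*(\omega))\,\zeta^{-1}$. Thus on $E_\zeta$ the point $\omega$ is completely determined by its image under $\alpha^*$: if $\omega_1,\omega_2\in E_\zeta$ with $\alpha^*(\omega_1) = \alpha^*(\omega_2)$, then $\omega_1 = c(\alpha^*(\omega_1))\,\zeta^{-1} = c(\alpha^*(\omega_2))\,\zeta^{-1} = \omega_2$. Hence $\alpha^*$ restricted to $E_\zeta$ is one-to-one into $\widehat\Gamma$, which is the second assertion.

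I do not expect a real obstacle; this is a bookkeeping argument driven entirely by the formula for $\tau$. The only points deserving a word of care are the measurability of the pieces $E_\zeta$ (handled by Borel-measurability of the cross-section $c$) and keeping the multiplicative group notation on $\widehat\Gamma$ straight when inverting the relation $c(\alpha^*(\omega))\,\omega^{-1} = \zeta$.
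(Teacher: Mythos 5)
Your proposal is correct and follows essentially the same route as the paper: the paper also deduces injectivity from the identity $c(\alpha^*(\omega)) = \tau(\omega)\omega = \zeta\omega$ on $E_\zeta$, which is exactly your observation that $\omega$ is recovered from $\alpha^*(\omega)$ via the cross-section. Your extra checks (that $\tau$ lands in $\ker(\alpha^*)$ and that the pieces are Borel) are sound additions the paper leaves implicit.
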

\begin{proof}
The first statement is clear.
\par
For $\omega\in E_\zeta$ we have
\[
c(\alpha^*(\omega)) = \tau(\omega)\omega = \zeta\omega,
\]
which shows that $\alpha^*$ must be 1-1 on $E_\zeta.
$
\end{proof}
Now let $E_1,E_2,\ldots$ be a (countable) collection of Borel
subsets of $\widehat\Gamma.
$
For each $i$  let $\nu_i$ be the restriction to $E_i$ of the measure $\nu.$  Write $E_{i,\zeta}$ for $[E_i]_{\zeta}$,
and let $\nu_{i,\zeta}$ be the restriction of $\nu_i$ to the subset $E_{i,\zeta}$ of $E_i.$
Write $\mathcal K = \bigoplus_i L^2(E_i,\nu_i).
$
For each $\zeta\in\ker(\alpha^*),$ define
$E'_{i,\zeta} = \alpha^*(E_{i,\zeta}),$ and
set $\nu'_{i,\zeta}$ equal to the measure $N\alpha^*_*(\nu_{i,\zeta})$ 
that is defined on $E'_{i,\zeta}$ by
\[
\nu'_{i,\zeta}(F) =N \nu_{i,\zeta}({\alpha^*}^{-1}(F)).
\]
\par
Define
\[
\mathcal K'= \bigoplus_{i,\zeta} L^2(E'_{i,\zeta},\nu'_{i,\zeta}).
\]

\begin{prop}\label{dilationoperator}
For each $f\in \mathcal K,$ set $\mathcal D(f)$
equal to the element of $\mathcal K'$ given by
\[
[\mathcal D(f)]_{i,\zeta}(\omega) = \frac1{\sqrt{N}}f_{i}(\zeta^{-1}c(\omega)).
\]
Then the operator $\mathcal D$ is an isometry of
$\mathcal K$ onto $\mathcal K'.$
\end{prop}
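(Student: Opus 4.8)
The plan is to verify the isometry property by a direct change-of-variables computation, using the preceding proposition to recognize $\alpha^*$ as a Borel isomorphism of each $E_{i,\zeta}$ onto $E'_{i,\zeta}$, and then to invert $\mathcal D$ explicitly.

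First I would record the pointwise identities coming from the cross-section. If $\omega'\in E_{i,\zeta}$, then by the definition of $\tau$ we have $c(\alpha^*(\omega'))=\tau(\omega')\omega'=\zeta\omega'$, hence $\zeta^{-1}c(\alpha^*(\omega'))=\omega'$. Conversely, for $\omega\in\widehat\Gamma$ the element $\zeta^{-1}c(\omega)$ satisfies $\alpha^*(\zeta^{-1}c(\omega))=\omega$, and since $\tau(c(\omega)\eta)=\eta^{-1}$ for $\eta\in\ker(\alpha^*)$, it is the unique preimage of $\omega$ under $\alpha^*$ whose $\tau$-value equals $\zeta$. In particular, when $\omega\in E'_{i,\zeta}=\alpha^*(E_{i,\zeta})$ this forces $\zeta^{-1}c(\omega)\in E_{i,\zeta}$, so that $\omega\mapsto\zeta^{-1}c(\omega)$ is a two-sided inverse for the injection $\alpha^*|_{E_{i,\zeta}}\colon E_{i,\zeta}\to E'_{i,\zeta}$ supplied by the preceding proposition; by the Lusin--Souslin theorem both maps are Borel, and the definition $\nu'_{i,\zeta}=N\,\alpha^*_*(\nu_{i,\zeta})$ exhibits $\alpha^*|_{E_{i,\zeta}}$ as scaling $\nu_{i,\zeta}$ by the factor $N$.

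Next I would compute $\|\mathcal D f\|^2_{\mathcal K'}$ by expanding over the direct sum. For each index $(i,\zeta)$, the change-of-variables formula for the push-forward $\nu'_{i,\zeta}=N\,\alpha^*_*\nu_{i,\zeta}$ converts $\int_{E'_{i,\zeta}}\bigl|[\mathcal D f]_{i,\zeta}(\omega)\bigr|^2\,d\nu'_{i,\zeta}(\omega)$ into $\int_{E_{i,\zeta}}\bigl|f_i(\zeta^{-1}c(\alpha^*(\omega')))\bigr|^2\,d\nu_{i,\zeta}(\omega')$, the factor $N$ cancelling the $1/N$ coming from $|1/\sqrt N|^2$; by the identity $\zeta^{-1}c(\alpha^*(\omega'))=\omega'$ this equals $\int_{E_{i,\zeta}}|f_i|^2\,d\nu_{i,\zeta}$. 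Since $E_i=\bigcup_\zeta E_{i,\zeta}$ disjointly with $\nu_i=\sum_\zeta\nu_{i,\zeta}$, summing over $\zeta$ and then over $i$ yields $\sum_i\int_{E_i}|f_i|^2\,d\nu_i=\|f\|^2_{\mathcal K}$. The same computation shows each $[\mathcal D f]_{i,\zeta}$ is square-integrable, so $\mathcal D$ is well defined. For surjectivity I would exhibit the inverse directly: given $g=(g_{i,\zeta})\in\mathcal K'$, define $f_i$ on $E_i$ by $f_i(\omega')=\sqrt N\,g_{i,\zeta}(\alpha^*(\omega'))$ for $\omega'\in E_{i,\zeta}$. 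The same change of variables shows $f=(f_i)\in\mathcal K$, and then $[\mathcal D f]_{i,\zeta}(\omega)=\tfrac1{\sqrt N}f_i(\zeta^{-1}c(\omega))=\tfrac1{\sqrt N}\sqrt N\,g_{i,\zeta}(\alpha^*(\zeta^{-1}c(\omega)))=g_{i,\zeta}(\omega)$, so $\mathcal D f=g$.

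I expect the only real obstacle to be the measure-theoretic bookkeeping: confirming that $\alpha^*$ restricts to a Borel isomorphism $E_{i,\zeta}\to E'_{i,\zeta}$ that scales $\nu_{i,\zeta}$ by $N$, so that the push-forward change of variables is legitimate and the $f_i$ above is genuinely Borel, and keeping the cross-section identity $\zeta^{-1}c(\alpha^*(\omega'))=\omega'$ straight so that the pieces $E_{i,\zeta}$ reassemble into $E_i$ with the correct measure. Once those points are settled, the argument is merely a cancellation of the normalizing constants $1/\sqrt N$ against $N$.
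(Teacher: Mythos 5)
Your proposal is correct and follows essentially the same route as the paper: the isometry is verified by the change of variables $\nu'_{i,\zeta}=N\,\alpha^*_*(\nu_{i,\zeta})$ together with the cross-section identity $\zeta^{-1}c(\alpha^*(\eta))=\zeta^{-1}\tau(\eta)\eta=\eta$ on $E_{i,\zeta}$, and surjectivity is established by exhibiting the explicit inverse, which in the paper's notation reads $[\mathcal D^{-1}(f)]_i(\omega)=\sqrt N\,f_{i,\tau(\omega)}(\alpha^*(\omega))$ and agrees with your piecewise formula since $\tau\equiv\zeta$ on $E_{i,\zeta}$. The extra measure-theoretic care you flag (Borel isomorphism of $E_{i,\zeta}$ onto $E'_{i,\zeta}$) is left implicit in the paper but is the same bookkeeping.
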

\begin{proof}
\begin{align*}
\|{\mathcal D}(f)\|^2
& = \sum_i\sum_\zeta \int_{E'_{i,\zeta} }|[{\mathcal D}(f)]_{i,\zeta}(\omega)|^2\, d\nu'_{i,\zeta}(\omega) \cr
& =\frac 1N \sum_i\sum_\zeta \int_{E'_{i,\zeta}} |f_i(\zeta^{-1}c(\omega))|^2 \, d\nu'_{i,\zeta}(\omega) \cr
& = \sum_i\sum_\zeta \int_{E_{i,\zeta}} |f_i(\zeta^{-1}c(\alpha^*(\eta))|^2 \, d\nu_{i,\zeta}(\eta) \cr
& = \sum_i\sum_\zeta \int_{E_{i,\zeta}} |f_i(\zeta^{-1}\tau(\eta)\eta)|^2 \, d\nu_{i,\zeta}(\eta) \cr
& = \sum_i\sum_\zeta \int_{i,\zeta} |f_i(\eta)|^2\, d\nu_{i,\zeta}(\eta) \cr
& = \sum_i \int_{E_i}|f_i(\eta)|^2\, d\nu_i(\eta) \cr
& = \|f\|^2,
\end{align*}
where the second to last step is justified because,
for $\eta\in E_{i,\zeta},$ we have $\tau(\eta) = \zeta.$
Thus, $\mathcal D$ is an isometry.
\par
To see that $\mathcal D$ is onto
$\mathcal K',$ it suffices to note that the inverse of $\mathcal D$ is given by
\[
[\mathcal D^{-1}(f)]_i(\omega) =\sqrt N f_{i,\tau(\omega)}(\alpha^*(\omega)).
\]
\end{proof}
We will refer to the space $\mathcal K'={\mathcal D}(\mathcal K)$ as a {\em dilation by} $\alpha^*$  of $\mathcal K$.
  Note that this general definition of $\mathcal D$ is consistent with the definition given at the beginning of this section for the special case of $\Gamma=\mathbb Z^d$.
\par
We are now ready to construct explicitly a GMRA 
from the parameters $m$, $H,$ and $G$.  
\begin{theorem}
\label{gmraconstr}
Suppose $m:\widehat{\Gamma}\rightarrow\{0,1,2,\cdots\}$ is a Borel function that satisfies the consistency inequality, and that $H=[h_{i,j}]$ is a filter relative to $m$  and $\alpha^*$
such that the Ruelle operator $S_H$ is a pure isometry on $\bigoplus_i L^2(\sigma_i).
$  Let $\widetilde{m}$ be defined from $m$ by the consistency equation (as in Equation (\ref{mtilde})), and let $G$ be a complementary filter 
 to $H$.
Define $\mathcal V_0  = \bigoplus_i L^2(\sigma_i)$ and
$\mathcal W_0=\bigoplus_k L^2(\widetilde\sigma_k).
$
For $n\geq 1,$ inductively set $\mathcal W_n={\mathcal D}(\mathcal W_{n-1}),$  and  set $\mathcal H= \mathcal V_0 \oplus\bigoplus_{n=0}^\infty \mathcal W_n.
$
Define a representation $\mathcal\pi$ of $\Gamma,$ acting in $\mathcal H,$ by
\[
[\mathcal\pi_\gamma(f)](\omega) = \omega(\gamma) f(\omega).
\] 
Finally, define an operator $T$ on $\mathcal H$ by
\begin{equation}
[T(f)]_a = \begin{cases}
S_H(f_{\mathcal{V}_0})+S_G(f_{\mathcal{W}_0}) & a=\mathcal{V}_0\\
\mathcal{D}^{-1}(f_{\mathcal W_{n+1}}) & a= \mathcal{W}_{n},\quad n\geq 0
\end{cases},
\end{equation}
where we represent an element $f$ of $\mathcal H$ by $\{f_{\mathcal V_0},f_{\mathcal W_0},f_{\mathcal W_1},\cdots\}$.
  Then
\begin{enumerate}
\item $T$ is a unitary operator on $\mathcal H.$
\item $T\mathcal\pi_\gamma T^{-1} = \mathcal\pi_{\alpha(\gamma)}$
for all $\gamma\in\Gamma.$
\item  If $ \mathcal V_j$ is defined to be $T^{-j}(\mathcal V_0),$
then the collection $\{ \mathcal V_j\}$ is a GMRA relative to
$\mathcal\pi$ and $\mathcal\delta,$ where $\mathcal\delta= T^{-1}.
$
\item  The multiplicity function associated to the core subspace $ \mathcal V_0$ is
the given function $m,$ and the given $H$
is a filter constructed from the GMRA $\{\mathcal V_j\}.
$
\end{enumerate}
\end{theorem}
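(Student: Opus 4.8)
The plan is to verify the four assertions essentially in order, since each builds on the previous one, and most of the work is in item (1).

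\textbf{Item (1): $T$ is unitary.} First I would observe that $T$ is built block-by-block, so its adjoint can be computed blockwise. On the $\mathcal{W}_n$-blocks with $n\geq 0$, $T$ sends $f_{\mathcal W_{n+1}}$ to $\mathcal D^{-1}(f_{\mathcal W_{n+1}})\in\mathcal W_n$; by Proposition~\ref{dilationoperator}, $\mathcal D$ is a unitary from $\mathcal W_n$ onto $\mathcal W_{n+1}$, so this part of $T$ is a (shifted) unitary rearrangement. The only genuinely interesting block is the map $(f_{\mathcal V_0},f_{\mathcal W_0})\mapsto S_H(f_{\mathcal V_0})+S_G(f_{\mathcal W_0})\in\mathcal V_0$. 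By Lemma~\ref{Ruelle}, $S_H^*S_H=I$, $S_G^*S_G=\widetilde I$, $S_H^*S_G=0$, and $S_HS_H^*+S_GS_G^*=I$, which says precisely that $(S_H\ S_G):\mathcal V_0\oplus\mathcal W_0\to\mathcal V_0$ is unitary with inverse $\binom{S_H^*}{S_G^*}$. Assembling these pieces, $T$ maps the orthogonal decomposition $\mathcal V_0\oplus\mathcal W_0\oplus\mathcal W_1\oplus\cdots$ unitarily onto $\mathcal V_0\oplus\mathcal W_0\oplus\mathcal W_1\oplus\cdots$ (the $\mathcal V_0\oplus\mathcal W_0$ piece going to $\mathcal V_0$, each $\mathcal W_{n+1}$ going to $\mathcal W_n$), and I would record the explicit formula for $T^{-1}$: on $\mathcal V_0$ it is $S_H^*\oplus S_G^*$ landing in $\mathcal V_0\oplus\mathcal W_0$, and on $\mathcal W_n$ it is $\mathcal D$ landing in $\mathcal W_{n+1}$.

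\textbf{Item (2): the covariance relation.} This is a direct computation. Since $\pi_\gamma$ is multiplication by the character $\omega\mapsto\omega(\gamma)$ on every summand, and $\delta=T^{-1}$, the claim $T\pi_\gamma T^{-1}=\pi_{\alpha(\gamma)}$ is equivalent to $\pi_{\alpha(\gamma)}T=T\pi_\gamma$. On the $\mathcal W_n$-blocks this reduces to checking that $\mathcal D^{-1}$ intertwines multiplication by $\omega(\gamma)$ with multiplication by $\omega(\alpha(\gamma))=[\alpha^*(\omega)](\gamma)$; from the formula $[\mathcal D^{-1}(f)]_i(\omega)=\sqrt N f_{i,\tau(\omega)}(\alpha^*(\omega))$ this is immediate because $\alpha^*$ appears in the argument. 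On the $\mathcal V_0$-block one checks $S_H\pi_\gamma=\pi_{\alpha(\gamma)}S_H$ and likewise for $S_G$, and again this follows from $[S_H(f)](\omega)=H^t(\omega)f(\alpha^*(\omega))$ since the character is evaluated at $\alpha^*(\omega)$ inside. I would also note $(\ref{piconj})$ is then automatic for $\mathcal \pi$ and $\mathcal\delta$.

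\textbf{Items (3) and (4): the GMRA axioms and identification of $m$ and $H$.} Setting $\mathcal V_j=T^{-j}(\mathcal V_0)=\delta^j(\mathcal V_0)$, axiom (2) of the GMRA definition is immediate, and $\pi$-invariance of $\mathcal V_0$ (axiom (4)) is clear since $\pi_\gamma$ is diagonal. For the nesting $\mathcal V_j\subseteq\mathcal V_{j+1}$, it suffices by applying powers of $\delta$ to check $\mathcal V_0\subseteq\mathcal V_1=\delta(\mathcal V_0)=T^{-1}(\mathcal V_0)$; equivalently $T(\mathcal V_0)\subseteq\mathcal V_0$, which holds because $T(\mathcal V_0)=S_H(\mathcal V_0)\subseteq\mathcal V_0$ (take $f_{\mathcal W_0}=0$). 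Density of $\cup\mathcal V_j$: since $\mathcal H=\mathcal V_0\oplus\bigoplus_n\mathcal W_n$ and one shows inductively that $\mathcal V_n=T^{-n}(\mathcal V_0)\supseteq\mathcal V_0\oplus\mathcal W_0\oplus\cdots\oplus\mathcal W_{n-1}$ --- using $\mathcal V_1=T^{-1}(\mathcal V_0)=T^{-1}(S_H\mathcal V_0+S_G\mathcal W_0)\supseteq\mathcal V_0\oplus\mathcal W_0$ by condition (3) of Lemma~\ref{Ruelle}, and then dilating --- the union exhausts a dense subspace. The trivial-intersection condition $\cap\mathcal V_j=\{0\}$ is where the hypothesis that $S_H$ is a \emph{pure} isometry enters: $\cap_{j\le 0}\mathcal V_j=\cap_{n\ge 0}\delta^{-n}(\mathcal V_0)=\cap_n T^n(\mathcal V_0)$, and on $\mathcal V_0$ the operator $T$ restricts to $S_H$, so $\cap_n T^n(\mathcal V_0)=\cap_n S_H^n(\mathcal V_0)=\{0\}$ precisely by purity; combined with the nesting this gives $\cap_j\mathcal V_j=\{0\}$. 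This is the one place the full strength of the hypothesis is used, so I expect it to be the main point to get right (as opposed to difficult). Finally, for item (4): $\mathcal V_0=\bigoplus_i L^2(\sigma_i)$ with $\pi$ acting by characters is exactly the Stone's-theorem model with multiplicity function $\sum_i\chi_{\sigma_i}=m$, so $m$ is the multiplicity function of $\mathcal V_0$; and taking $J=\mathrm{id}$ on $\mathcal V_0$ and $\widetilde J=\mathrm{id}$ on $\mathcal W_0$, the element $J(\delta^{-1}(J^{-1}(C_i)))=T(C_i)=S_H(C_i)$ has $j$th coordinate $h_{i,j}$ by the definition of $S_H$ and the fact that the $i$th coordinate of $C_i$ is $\chi_{\sigma_i}$, so $[h_{i,j}]$ is exactly the filter constructed from $\{\mathcal V_j\}$ via these maps.

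The main obstacle --- really the only nonroutine step --- is confirming that $\cap_j\mathcal V_j=\{0\}$ reduces cleanly to the purity of $S_H$, i.e., that the negative-index intersection is $\cap_n S_H^n(\mathcal V_0)$; everything else is bookkeeping with Lemma~\ref{Ruelle} and Proposition~\ref{dilationoperator}.
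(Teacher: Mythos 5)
Your proposal is correct and follows essentially the same route as the paper: unitarity from Proposition \ref{dilationoperator} plus the Cuntz relations of Lemma \ref{Ruelle}, the covariance relation from the appearance of $\alpha^*$ in the arguments, nesting from $T(\mathcal V_0)=S_H(\mathcal V_0)\subseteq\mathcal V_0$, density from $T^{-1}(\mathcal V_0)=\mathcal V_0\oplus\mathcal W_0$ and $T^{-1}(\mathcal W_n)=\mathcal W_{n+1}$, trivial intersection from purity of $S_H$, and the identification of $H$ via $T(C_i)=S_H(C_i)$ with $J$ the identity. Your write-up is somewhat more explicit than the paper's (e.g., spelling out the block-operator inverse and the induction for density), but the ideas coincide.
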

\begin{proof}
To prove the first claim, note that  by Proposition \ref{dilationoperator}, $\mathcal D^{-1}$ is an isometry from $\mathcal W_{n+1}$ onto $\mathcal W_{n}$.
  By Lemma \ref{Ruelle}, we also have that the definition of $T$ above gives an isometry from ${\mathcal V_0}\oplus \mathcal W_0$ onto $\mathcal V_0$.
The second claim follows immediately from the definitions, since the operators $S_H$, $S_G$ and $\mathcal D^{-1}$ all change the argument of the function from $\omega$ to $\alpha^*(\omega)$.
\par
Next, we show that the collection $\{\mathcal V_j\}$ is a GMRA.
The fact that $\mathcal V_j\subseteq \mathcal V_{j+1}$
follows from $T(\mathcal V_0)\subset \mathcal V_0$, which is immediate from the definition of $T$.
That $\mathcal V_{j+1} = \mathcal\delta(\mathcal V_j)$ follows immediately from the definition of $\mathcal\delta= T^{-1}.
$
  The trivial intersection property follows from our assumption that $S_H$ is a pure isometry, and the dense union from the fact noted in the previous paragraph that $T^{-1}\mathcal V_0=\mathcal V_0\oplus \mathcal W_0$ and $T^{-1} \mathcal W_n=\mathcal W_{n+1}$.
\par
  As a component of the direct sum space, $\mathcal V_0$ is clearly invariant under the multiplication operators $\omega(\gamma)$ that define the representation $\mathcal\pi$.
The given function $m$ is clearly the multiplicity function of that representation.
  To establish that $H$ is a corresponding filter, we note that we can take $J$ to be the identity for this $\mathcal V_0$, and calculate 
\begin{align*}
J\delta^{-1}J^{-1}(C_i)(\omega)&=T(C_i)(\omega)\cr
&=S_H(C_i)(\omega)\cr
&=\bigoplus_j h_{i,j}(\omega)\chi_{\sigma_i}(\alpha^*(\omega))\cr
&=\bigoplus _j h_{i,j}(\omega),
\end{align*}
where the last equality follows from the fact that by the filter equation, $h_{i,j}$ is supported on ${\alpha^*}^{-1}(\sigma_i)$.
\end{proof}
\begin{remark}
We will denote the GMRA $\{\mathcal V_j\}$ constructed above by
$\{V_j^{m,H,G}\}$
and refer to it as the {\bf canonical GMRA} having these parameters.
\end{remark}
In Section \ref{examples} we will construct canonical GMRAs related to classical examples, as well as new ones.
  First, we establish in the next section conditions under which two GMRAs are the same.  While our construction procedure requires the choice of a complementary filter $G$, we will see that the equivalence classes depend only on the two parameters $m$ and $H$.   
\section{A classifying set for GMRAs}
\label{equivalence}
Let $\{V_j\}$ be a GMRA in a Hilbert space $\mathcal H,$
relative to a representation $\pi$ of $\Gamma$ and a unitary operator $\delta,$
and let $\{V'_j\}$ be a GMRA in a Hilbert space $\mathcal H',$
relative to a representation $\pi'$ of $\Gamma$ and a unitary operator $\delta'.
$
\begin{definition}
\label{equiv}
We say that the GMRAs $\{V_j\}$ and $\{V'_j\}$ are
{\bf equivalent} if there exists a unitary operator
$U:\mathcal H \to \mathcal H'$ that satisfies:
\begin{enumerate}
\item $U(V_j) = V'_j$ for all $j.
$
\item $U\circ \pi_\gamma = \pi'_\gamma \circ U$ for all $\gamma\in\Gamma.
$
\item $U\circ \delta = \delta'\circ U.
$
\end{enumerate}
\end{definition}
For classical examples in $L^2(\mathbb R^d)$, the Fourier transform $\mathcal F$ gives an equivalence between any GMRA $\{V_j\}$ and $\{\widehat{V}_j\}$.
  Further, 
if an operator $U$ gives an equivalence between $\{V_j\}$ and $\{V'_j\}$, two GMRAs for dilation by $A$ and translation by $\mathbb Z^d$ in $L^2(\mathbb R^d)$, then $\widehat{U}=\mathcal F\circ U\mathcal F^{-1} $ is multiplication by a function $u$ with absolute value 1, and such that $u(A^{*j}\omega) = u(\omega)$ for all integers $j$ (\cite {Be}).    Thus equivalence between GMRAs for the same dilation in $L^2(\mathbb R^d)$ generalizes the notion of different MSF wavelets attached to the same wavelet set.
\par
Recall that we consider only GMRAs with a finite multiplicity function $m$ and with the associated measure $\mu$ absolutely continuous with respect to Haar measure.   
Our first aim is to prove that every such GMRA is equivalent to one of the canonical GMRAs constructed in the preceding section.
We will then describe the equivalence relation among these
GMRAs in terms of the parameters $m,$ $H$ and $G$.  We will need the following lemma.
   
\begin{lemma}\label{v0w0}
The GMRAs $\{V_j\}$ and $\{V'_j\}$ are equivalent
if and only if there exists a unitary operator
 $P$ mapping $V_0$ onto $ V'_0$ that satisfies:
\begin{enumerate}
\item $P\circ \pi_\gamma = \pi'_\gamma \circ P$ for all $\gamma\in\Gamma.
$
\item $P\circ \delta^{-1} = {\delta'}^{-1} \circ P.
$
\end{enumerate}
\end{lemma}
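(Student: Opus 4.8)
The plan is to reduce equivalence of the two GMRAs, a property of operators on the full spaces, to a property of a single operator on the core subspaces, using only the nested structure $V_j = \delta^j(V_0)$ together with the commutation relation (\ref{piconj}).

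The forward implication is immediate. If $U:\mathcal H\to\mathcal H'$ implements the equivalence in the sense of Definition \ref{equiv}, then since $\delta^{-1}(V_0)=V_{-1}\subseteq V_0$ and similarly ${\delta'}^{-1}(V'_0)\subseteq V'_0$, the restriction $P:=U|_{V_0}$ is a unitary of $V_0$ onto $V'_0$, and conditions (1) and (2) of the lemma are nothing but the restrictions to $V_0$ of conditions (2) and (3) of Definition \ref{equiv} (rewriting $U\delta=\delta'U$ as $U\delta^{-1}={\delta'}^{-1}U$).

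For the converse, suppose such a $P$ is given. Because every $V_j$ equals $\delta^j(V_0)$ and every $V'_j$ equals ${\delta'}^j(V'_0)$, I would define $U$ on $V_j$ by $U|_{V_j}:={\delta'}^{\,j}\circ P\circ\delta^{-j}$, a composition of the unitary $\delta^{-j}:V_j\to V_0$, the unitary $P:V_0\to V'_0$, and the unitary ${\delta'}^{\,j}:V'_0\to V'_j$, hence an isometry of $V_j$ onto $V'_j$. The first thing to verify is that these definitions agree on the (nested, hence increasing) union $\bigcup_j V_j$: for $f\in V_j\subseteq V_{j+1}$ put $g=\delta^{-j}f\in V_0$; equating the level-$j$ and level-$(j+1)$ formulas reduces, after applying ${\delta'}^{-j}$, to the identity $Pg={\delta'}P\delta^{-1}g$, and this holds because $\delta^{-1}g\in V_{-1}\subseteq V_0$ and hypothesis (2) gives $P\delta^{-1}g={\delta'}^{-1}Pg$. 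Thus $U$ is a well-defined isometry on the dense subspace $\bigcup_j V_j$ of $\mathcal H$; I extend it by continuity to all of $\mathcal H$. Its range is closed and contains the dense subspace $\bigcup_j V'_j$ of $\mathcal H'$, so $U$ is surjective, hence unitary.

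It then remains to check the three conditions of Definition \ref{equiv}. Condition (1), $U(V_j)=V'_j$, is built into the construction. For (3), $U\delta=\delta'U$: on each $V_j$ both sides equal ${\delta'}^{\,j+1}P\delta^{-j}f$, and $\bigcup_j V_j$ is dense, so the identity holds on $\mathcal H$. For (2), I would work on the still-dense subspace $\bigcup_{j\ge 0}V_j$ (which equals $\bigcup_j V_j$ since the $V_j$ are nested), each piece $V_j$ with $j\ge 0$ being $\pi$-invariant because $\delta^{-j}\pi_\gamma\delta^{\,j}=\pi_{\alpha^j(\gamma)}$ by (\ref{piconj}); for $f\in V_j$ with $j\ge 0$ one computes, using (\ref{piconj}), the intertwining of $P$ on $V_0$, and the analogue of (\ref{piconj}) for the primed system, that $U\pi_\gamma f={\delta'}^{\,j}\pi'_{\alpha^j(\gamma)}P\delta^{-j}f=\pi'_\gamma{\delta'}^{\,j}P\delta^{-j}f=\pi'_\gamma Uf$, and density finishes it. The only real point requiring care—the ``main obstacle,'' such as it is—is this last bookkeeping: since $\alpha$ need not be onto, $V_{-1}$ need not be $\pi$-invariant, so the intertwining identity must be verified on the union of the nonnegative-index spaces $V_j$ rather than naively on all of $\bigcup_j V_j$. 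Everything else is a routine chase through the defining relations.
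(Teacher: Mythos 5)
Your proof is correct and takes essentially the same route as the paper: both extend $P$ to the whole space by conjugating with powers of the dilations, the paper writing $U = P\oplus\bigoplus_{n\ge 0}{\delta'}^{\,n+1}\circ P\circ\delta^{-(n+1)}$ relative to the orthogonal decomposition $\mathcal H=V_0\oplus\bigoplus_{n\ge 0}W_n$, while you define the same operator on the increasing union $\bigcup_j V_j$ and extend by continuity. Your verification of the $\pi$-intertwining on the nonnegative-index spaces is a correct filling-in of a detail the paper dispatches with ``one checks directly.''
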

\begin{proof}
We first assume that the conditions above are satisfied, and show that $\{V_j\}$ and $\{V'_j\}$ are equivalent.
 For each $n\geq0,$ define an operator $Q_n:W_n\to W'_n$ by
\[
Q_n = {\delta'}^{n+1} \circ P \circ \delta^{-(n+1)}.
\]
Now, define $U=P\oplus\bigoplus_{n=0}^{\infty}Q_n$ on  $\mathcal H=V_0\oplus\bigoplus_{n=0}^{\infty} W_n.$
One checks directly that $U$ satisfies the required conditions.
\par
For the converse, assume that $\{V_j\}$ and $\{V'_j\}$ are equivalent, with $U:\mathcal H \to \mathcal H'$
implementing the equivalence.
 Define $P\;=\;U|_{V_0}.$
By the definition of equivalence, $P$ maps $V_0$ to $V'_0,$ and conditions (1) and (2) follow.  
\end{proof}
\begin{theorem}
\label{equiv1}
Let $\{V_j\}$ be a GMRA.
Let $m$ be its (finite) associated multiplicity function, and let $H=[h_{i,j}]$ be a filter constructed from the GMRA using the map $J$.  
Let $G$ be a complementary filter to $H.$
  Then the GMRA $\{V_j\}$ is equivalent to the canonical GMRA $\{V_j^{m,H,G}\}.
$
\end{theorem}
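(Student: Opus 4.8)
The plan is to invoke Lemma~\ref{v0w0}, which reduces the claim to exhibiting a single unitary $P$ from $V_0$ onto $\mathcal V_0=\bigoplus_i L^2(\sigma_i)$ that intertwines $\pi_\gamma$ with the character-multiplication representation $\mathcal\pi_\gamma$ of the canonical construction and intertwines $\delta^{-1}$ with $\mathcal\delta^{-1}=T$. Before applying the lemma I would first observe that the canonical GMRA $\{V_j^{m,H,G}\}$ is well defined under the present hypotheses: since $\{V_j\}$ is a GMRA we have $\cap V_j=\{0\}$, and the discussion preceding Theorem~\ref{gmraconstr} shows that this forces $S_H=J\circ\delta^{-1}\circ J^{-1}$ to be a pure isometry, so Theorem~\ref{gmraconstr} applies to the triple $(m,H,G)$. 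I would also note that the sets $\{\sigma_i\}$, $\{\widetilde\sigma_k\}$ and the function $\widetilde m$ appearing in that construction coincide with the ones attached to the GMRA, since $\widetilde m$ is determined by $m$ via the consistency equation.

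The candidate for $P$ is the Stone's-theorem map $J:V_0\to\bigoplus_i L^2(\sigma_i)$ that was used to build the filter $H$ in the first place. Verifying condition~(1) of Lemma~\ref{v0w0} is immediate from the defining property of $J$: $[J(\pi_\gamma f)](\omega)=\omega(\gamma)[J(f)](\omega)=[\mathcal\pi_\gamma(Jf)](\omega)$, and $\mathcal\pi$ is exactly the representation used in Theorem~\ref{gmraconstr}. For condition~(2), I would use two facts recorded in Section~\ref{gmras}: first, that a filter constructed from a GMRA satisfies $S_H=J\circ\delta^{-1}\circ J^{-1}$ on $\mathcal V_0$ (note $\delta^{-1}$ maps $V_0$ into itself because $V_{-1}\subseteq V_0$); and second, that by the definition of the operator $T$ in Theorem~\ref{gmraconstr}, the restriction of $T$ to $\mathcal V_0$ (elements whose $\mathcal W_n$-components vanish) is precisely $S_H$. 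Combining these gives $T\circ J=S_H\circ J=J\circ\delta^{-1}$ on $V_0$, which is exactly $P\circ\delta^{-1}=\mathcal\delta^{-1}\circ P$. Lemma~\ref{v0w0} then produces the equivalence $U=P\oplus\bigoplus_{n\ge0}Q_n$ between $\{V_j\}$ and $\{V_j^{m,H,G}\}$.

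I do not anticipate a real obstacle; the substance of the argument is the bookkeeping identity $T|_{\mathcal V_0}=S_H=J\circ\delta^{-1}\circ J^{-1}$ together with the fact that $J$ already carries $\pi|_{V_0}$ into exactly the diagonal form used in the canonical construction. The points that most deserve care are: (a) confirming that $J$ is unitary \emph{onto} $\mathcal V_0$ with the stated intertwining property, which is precisely what the Stone's-theorem discussion provides given our standing assumptions that $m$ is finite a.e.\ and that the associated measure is Haar measure restricted to $\sigma_1$; and (b) checking that the complementary filter $G$ never enters the construction of $P$, so that $\{V_j\}$ is equivalent to $\{V_j^{m,H,G}\}$ for \emph{every} admissible $G$ --- this is clear since $P$ depends only on $J$ and the $\mathcal W_n$ portion of $\mathcal H$ is reconstituted from $P$ by dilation inside Lemma~\ref{v0w0}. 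The stronger fact that the equivalence class is intrinsically independent of $G$ is left to the subsequent results.
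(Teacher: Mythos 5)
Your proof is correct and follows essentially the same route as the paper's: take $P=J$, get condition (1) of Lemma~\ref{v0w0} from the Stone's-theorem intertwining property, and get condition (2) from the identity $S_H=J\circ\delta^{-1}\circ J^{-1}$ together with $T|_{\mathcal V_0}=S_H$. Your additional checks (that $S_H$ is a pure isometry so the canonical GMRA exists, and that $G$ never enters the construction of $P$) are sensible elaborations of points the paper leaves implicit.
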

\begin{proof}
Define $P:V_0\mapsto\bigoplus L^2(\sigma_i)$ by $P=J.$ 
Condition (1) of Lemma \ref{v0w0} follows immediately.
The fact that $J\circ \delta^{-1} \circ J^{-1} = S_H$
proves the second condition of that lemma.
\end{proof}
\begin{theorem}
\label{equiv2}
The canonical GMRAs $\{V_j^{m,H,G}\}$ and $\{{V}_j^{m',H',G'}\}$
are equivalent if and only if $m=m',$ and there exists a matrix-valued function $A$ on $\widehat\Gamma$ such that
\begin{enumerate}
\item $A(\omega)= \left(\begin{matrix} A_1(\omega) & 0\\
0& 0\end{matrix}\right),$ where $A_1(\omega)$ is a unitary matrix of dimension $m(\omega).
$
\item $H(\omega)A^t(\omega) = A^t(\alpha^*(\omega))H'(\omega).
$
\end{enumerate}
\end{theorem}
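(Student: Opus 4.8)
The plan is to reduce the entire statement to Lemma~\ref{v0w0}. That lemma says the two canonical GMRAs are equivalent precisely when there is a unitary $P$ from $\mathcal V_0=\bigoplus_i L^2(\sigma_i)$ onto $\mathcal V_0'=\bigoplus_i L^2(\sigma_i')$ satisfying $P\circ\pi_\gamma=\pi'_\gamma\circ P$ for all $\gamma$ and $P\circ\delta^{-1}={\delta'}^{-1}\circ P$. For a canonical GMRA the representation $\pi$ is multiplication by characters, and, as computed in the proof of Theorem~\ref{gmraconstr} (where $J$ is the identity on $\mathcal V_0$), the inverse dilation $\delta^{-1}=T$ restricts on the core subspace to the Ruelle operator $S_H$. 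So everything comes down to identifying the unitaries $P$ with $P\circ\pi_\gamma=\pi'_\gamma\circ P$ for all $\gamma$ and $PS_H=S_{H'}P$.

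For the forward direction, suppose such a $P$ exists. First, $P$ intertwines $\pi|_{\mathcal V_0}$ and $\pi'|_{\mathcal V_0'}$, and since the associated multiplicity function is a unitary invariant of this representation (unique up to null sets by Stone's theorem), we get $m=m'$; hence $\sigma_i=\sigma_i'$ and $\mathcal V_0=\mathcal V_0'$. Next, $P$ commutes with multiplication by every character $\omega\mapsto\omega(\gamma)$, and since finite linear combinations of characters are uniformly dense in $C(\widehat\Gamma)$, $P$ commutes with multiplication by every bounded Borel function; thus $P$ is a decomposable operator. Therefore there is a measurable field $\omega\mapsto A_1(\omega)$ of $m(\omega)\times m(\omega)$ unitary matrices with $[Pf](\omega)=A_1(\omega)f(\omega)$, and padding $A_1(\omega)$ with zeros to the full index set produces a matrix-valued function $A$ of the form in part (1). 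Finally, substituting this formula for $P$ into $PS_H=S_{H'}P$ gives, for every $f$ and a.e.\ $\omega$,
\[
A(\omega)H^t(\omega)f(\alpha^*(\omega))=(H')^t(\omega)A(\alpha^*(\omega))f(\alpha^*(\omega)),
\]
and, using that $\alpha^*$ preserves the Haar measure class so that ``a.e.\ $\eta$'' pulls back to ``a.e.\ $\omega$,'' this forces $A(\omega)H^t(\omega)=(H')^t(\omega)A(\alpha^*(\omega))$ a.e.; transposing both sides yields part (2).

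For the converse, assume $m=m'$ and that $A$ satisfies (1) and (2). Define $P$ on $\bigoplus_i L^2(\sigma_i)$ by $[Pf](\omega)=A(\omega)f(\omega)$. Since $A_1(\omega)$ is unitary on the fiber $\C^{m(\omega)}$ and vanishes off it, a short computation shows $P$ is a well-defined unitary whose $i$th component is supported in $\sigma_i$; it manifestly commutes with multiplication by characters; and transposing condition (2) back gives $A(\omega)H^t(\omega)=(H')^t(\omega)A(\alpha^*(\omega))$, which is exactly the assertion that $PS_H=S_{H'}P$. Lemma~\ref{v0w0} then delivers the equivalence.

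The step I expect to require the most care is the decomposability of $P$ in the forward direction: one must invoke the theory of decomposable operators on the direct integral $\bigoplus_i L^2(\sigma_i)$, whose fiber dimension $m(\omega)$ varies with $\omega$, and keep careful track of supports so that the zero-padded matrix $A(\omega)$ has exactly the block form of part (1) and composes correctly with $H(\omega)$, whose nonzero part lies in the $m(\alpha^*(\omega))\times m(\omega)$ corner. The small measure-theoretic point that a pointwise identity holding for all $f$ yields the corresponding matrix identity a.e.\ also belongs here, but it is routine given that $\alpha^*$ preserves the Haar measure class.
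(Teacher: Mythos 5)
Your proposal is correct and follows essentially the same route as the paper: both directions reduce to Lemma~\ref{v0w0} on the core subspaces, with the unitary $P$ identified as multiplication by a block-unitary matrix function $A$ (the paper simply cites Beals for the decomposability step you sketch via density of characters), and the intertwining $PS_H=S_{H'}P$ transposed into condition (2).
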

\begin{proof}
Suppose first that $m=m'$ and that there exists a matrix-valued
function $A$ satisfying the conditions.  Let $\tau_r$ be the subset of $\widehat\Gamma$ on which $m(\omega) = m'(\omega)=r.$
Then both subspaces
$V_0^{m,H,G}$ and $V_0^{m',H',G'}$ are equal to
\[
\bigoplus_i L^2(\sigma_i) \equiv \bigoplus_r L^2(\tau_r,{\mathbb C}^r).
\]
Define $P:V^{m,H,G}_0\to
 V^{m',H',G'}_0$ by $[P(f)](\omega) = A(\omega)f(\omega)$.
It follows directly that $P$  satisfies the conditions of Lemma \ref{v0w0},
and hence $\{V_j^{m,H,G}\}$ and $\{{V}_j^{m',H',G'}\}$ are equivalent.
\par
Conversely, suppose an operator $P$ 
exists and satisfies
the conditions of Lemma \ref{v0w0}.
The first condition on $P$ implies that the two
representations of $\Gamma$ on $V^{m,H,G}_0$ and $V^{m',H',G'}_0$ are
unitarily equivalent, whence $m$ must equal $m',$
and $V^{m,H,G}_0=V^{m',H',G'}_0=\bigoplus_i L^2(\sigma_i) =\bigoplus_r L^2(\tau_r,{\mathbb C}^r).$
It is known (e.g. \cite{Be}) that any unitary operator $P$ on the direct sum of
vector-valued $L^2$ spaces that commutes
with all the multiplication operators $\gamma(\omega),$ is itself a multiplication operator of the form
\[
[P(f)](\omega) = A(\omega)f(\omega),
\]
where $A(\omega)= \left(\begin{matrix} A_1(\omega) & 0\\
0& 0\end{matrix}\right),$ and $A_1(\omega)$ is a unitary matrix whose dimension is $r=m(\omega)$ for $\omega\in\tau_r.$
The second condition of Lemma \ref{v0w0} then implies that  $A$ satisfies condition (2) of the theorem.
\end{proof}
\begin{cor}
Let $m$ be a multiplicity function and let $H$ be a filter
relative to $m$ and $\alpha^*$
for which $S_H$ is a pure isometry.
If $G$ and $G'$ are any two complementary filters
to $H,$ then the
GMRAs $\{V_j^{m,H,G}\}$ and $\{V_j^{m,H,G'}\}$ are equivalent.
\end{cor}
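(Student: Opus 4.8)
The plan is to deduce the corollary directly from Theorem \ref{equiv2}. The two objects in question are canonical GMRAs attached to the \emph{same} multiplicity function $m$ and the \emph{same} filter $H$; only the choices of complementary filter, $G$ versus $G'$, differ. First I would observe that both $\{V_j^{m,H,G}\}$ and $\{V_j^{m,H,G'}\}$ are well defined: $\widetilde m$, and hence $\mathcal W_0 = \bigoplus_k L^2(\widetilde\sigma_k)$, is determined by $m$; complementary filters $G$ and $G'$ to $H$ exist by \cite{BCM}; and $S_H$ is a pure isometry by hypothesis, so Theorem \ref{gmraconstr} applies in each case. The crucial point is that the equivalence criterion of Theorem \ref{equiv2} makes no reference whatsoever to the complementary filter, so it suffices to exhibit a single matrix-valued function $A$ on $\widehat\Gamma$ satisfying conditions (1) and (2) of that theorem in the case $m = m'$, $H' = H$.

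The obvious candidate is $A(\omega) = \left(\begin{matrix} I_{m(\omega)} & 0 \\ 0 & 0\end{matrix}\right)$, the orthogonal projection onto the first $m(\omega)$ coordinates of $\bigoplus_i L^2(\sigma_i)$. Its upper-left block is the $m(\omega)\times m(\omega)$ identity matrix, which is unitary, so condition (1) holds. For condition (2) I would invoke the observation recorded just after the definition of a filter: the nonzero portion of $H(\omega)$ lies in the upper-left $m(\alpha^*(\omega))\times m(\omega)$ block. Since this $A$ is symmetric, $A^t = A$; right multiplication of $H(\omega)$ by $A(\omega)$ then acts as the identity, because the columns of $H(\omega)$ beyond the $m(\omega)$-th already vanish, and left multiplication by $A(\alpha^*(\omega))$ likewise acts as the identity, because the rows beyond the $m(\alpha^*(\omega))$-th already vanish. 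Hence $H(\omega)A^t(\omega) = H(\omega) = A^t(\alpha^*(\omega))H(\omega)$, which is exactly condition (2) with $H' = H$.

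With both hypotheses of the ``if'' direction of Theorem \ref{equiv2} verified, that theorem delivers the equivalence of $\{V_j^{m,H,G}\}$ and $\{V_j^{m,H,G'}\}$. (Unwound through Lemma \ref{v0w0}, the implementing unitary is the one built from $P = \mathrm{id}$ on $\mathcal V_0$: both canonical dilations restrict to $S_H$ on $\mathcal V_0$, they differ only in that $S_G$ versus $S_{G'}$ carries $\mathcal W_0$ into $\mathcal V_0$, and the summands $\mathcal W_n$ for $n\ge 1$ are matched by conjugating $P$ with the appropriate powers of the two dilations.) I expect no real obstacle here: beyond the elementary block-and-support bookkeeping needed for condition (2), all the analytic content — in particular the implication that $S_H$ being a pure isometry forces $\cap_j \mathcal V_j = \{0\}$ — has already been absorbed into Theorems \ref{gmraconstr} and \ref{equiv2}.
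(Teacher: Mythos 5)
Your proposal is correct and matches the paper's intent exactly: the corollary is stated without proof precisely because it is the immediate application of the ``if'' direction of Theorem \ref{equiv2} with $m=m'$, $H'=H$, and $A$ the partial identity, whose conditions you verify correctly. Your extra remarks unwinding the unitary through Lemma \ref{v0w0} are accurate but not needed beyond what Theorem \ref{equiv2} already provides.
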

The preceding theorem introduces a notion of equivalence
among filters that we will use to
build a set of classifying parameters for the equivalence classes of GMRAs.  In the following definition, we use our knowledge of the form of $A$ to rewrite the equivalence using the conjugate transpose $A^*$.
\begin{definition}
\label{equivfilt}
Let $m$ be a multiplicity function.
Filters $H$ and $H'$ relative to $m$ and $\alpha^*$ are called
{\bf equivalent} if there exists a matrix-valued
function $A$ on $\widehat\Gamma,$ with $A(\omega)= \left(\begin{matrix} A_1(\omega) & 0\\
0& 0\end{matrix}\right),$ where $A_1(\omega)$ is a unitary matrix of dimension $m(\omega),
$ and such that
\[
H'(\omega) = A(\alpha^*(\omega)) H(\omega) A^*(\omega)
\]
for almost all $\omega\in\widehat\Gamma.$
\end{definition}
\begin{remark}
If $H$ and $H'$ are two filters constructed from the same GMRA using different Stone's Theorem operators $J$ and $J'$, then $H$ and $H'$ are equivalent according to this definition.  Here the matrix-valued function $A$ comes from the multiplication operator $J'J^{-1}$.
\end{remark} 
\begin{lemma}
Let $H$ be a filter relative to $m$ and $\alpha^*,$
and let $A$ be a matrix-valued function of the
form described in the preceding theorem.
Define the matrix-valued function $H'$ by
\[
H'(\omega) = A(\alpha^*(\omega)) H(\omega) A^*(\omega).
\]
Then $H'$ is a filter relative to $m$ and $\alpha^*,$
i.e., $H'$ satisfies the filter equation.
\end{lemma}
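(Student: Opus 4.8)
The plan is to check that $H'$ satisfies the two defining requirements of a filter relative to $m$ and $\alpha^*$: the filter equation (\ref{filter1}) and the support condition that each $h'_{i,j}$ vanish off $\sigma_j$. It is cleanest to argue in matrix form. I would first introduce, for $r\in\{0,1,2,\ldots\}$, the diagonal projection $Q_r$ onto the first $r$ coordinates, and observe that $Q_{m(\omega)}=\operatorname{diag}(\chi_{\sigma_1}(\omega),\chi_{\sigma_2}(\omega),\ldots)$ since $\chi_{\sigma_i}(\omega)=1$ exactly when $m(\omega)\ge i$. In this notation (\ref{filter1}) for $H$ is the single matrix identity $\sum_{\alpha^*(\zeta)=\omega}H(\zeta)H(\zeta)^*=N\,Q_{m(\omega)}$ for almost all $\omega$.

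Next I would record the elementary identities that follow from the prescribed block form of $A$ and from the support structure of $H$: since $A_1(\omega)$ is unitary of size $m(\omega)$ and $A(\omega)$ is zero off the corresponding upper-left block, we have $A(\omega)A^*(\omega)=A^*(\omega)A(\omega)=Q_{m(\omega)}$ and $A(\omega)Q_{m(\omega)}=Q_{m(\omega)}A(\omega)=A(\omega)$; and by the remark following the definition of a filter, the nonzero part of $H(\zeta)$ lies in the upper-left $m(\alpha^*(\zeta))\times m(\zeta)$ block, so $H(\zeta)Q_{m(\zeta)}=H(\zeta)$.

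The core step is then a short computation. For $\zeta$ in the fiber over $\omega$ I would substitute $H'(\zeta)=A(\alpha^*(\zeta))H(\zeta)A^*(\zeta)=A(\omega)H(\zeta)A^*(\zeta)$ into $H'(\zeta)H'(\zeta)^*$, use $A^*(\zeta)A(\zeta)=Q_{m(\zeta)}$ together with $H(\zeta)Q_{m(\zeta)}=H(\zeta)$ to reduce it to $A(\omega)H(\zeta)H(\zeta)^*A^*(\omega)$, sum over the fiber (finite for a.e.\ $\omega$), pull $A(\omega)$ and $A^*(\omega)$ outside the sum, apply the matrix form of (\ref{filter1}) for $H$, and finish with $A(\omega)Q_{m(\omega)}A^*(\omega)=A(\omega)A^*(\omega)=Q_{m(\omega)}$ to obtain $\sum_{\alpha^*(\zeta)=\omega}H'(\zeta)H'(\zeta)^*=N\,Q_{m(\omega)}$; reading off the $(i,i')$ entry is exactly (\ref{filter1}) for $H'$. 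For the support condition I would just note from the block forms that in $H'(\omega)=A(\alpha^*(\omega))H(\omega)A^*(\omega)$ the right factor annihilates all columns past the $m(\omega)$th and the left factor annihilates all rows past the $m(\alpha^*(\omega))$th, so $H'(\omega)$ is supported in the upper-left $m(\alpha^*(\omega))\times m(\omega)$ block, i.e.\ $h'_{i,j}=0$ off $\sigma_j$.

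I do not expect a genuine obstacle: the whole argument is bookkeeping. The one point that needs a moment's care is the correct use of the support structure of $H$, namely the identity $H(\zeta)Q_{m(\zeta)}=H(\zeta)$ (this is precisely the ``upper left block'' remark following the definition of filter), together with the observation that $H(\zeta)$ is effectively a finite matrix for a.e.\ $\zeta$, so the rearrangement of the a priori infinite sums and matrix products is harmless.
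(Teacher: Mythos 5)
Your proposal is correct and follows essentially the same route as the paper: the paper truncates to the nonzero blocks $H_1,A_1$ and computes $\sum_{\alpha^*(\zeta)=\omega}H_1'(\zeta){H_1'}^*(\zeta)=A_1(\omega)\Lambda(\omega)A_1^*(\omega)=\Lambda(\omega)$ using $A_1^*(\zeta)A_1(\zeta)=I$, which is exactly your computation with the projections $Q_{m(\cdot)}$ playing the role of the block truncation. Your additional verification of the support condition is a harmless extra that the paper omits.
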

\begin{proof}
  We note that if we write $H_1(\omega)$ for the upper left $m(\alpha^*(\omega))\times m(\omega)$ block of $H(\omega),$ and let $\Lambda(\omega)$ be $N$ times the $m(\omega)\times m(\omega)$ identity, then the filter equation (\ref{filter1}) can be rewritten as
\begin{equation}
\label{filter1b}
\sum_{\alpha^*(\zeta)=\omega} H_1(\zeta){H}_1^*(\zeta)=\Lambda(\omega).
\end{equation}
We must show that if $H$ satisfies Equation (\ref{filter1b}), then so does $H'$.
We have
\begin{eqnarray*}
\sum_{\alpha^*(\zeta)=\omega} H'_1(\zeta){H'}_1^*(\zeta)&=&\sum_{\alpha^*(\zeta)=\omega}A_1(\omega) H_1(\zeta) A_1^*(\zeta)A_1(\zeta) H_1^*(\zeta) A_1^*(\omega)\\
&=&A_1(\omega)\Lambda(\omega)A_1^*(\omega)\\
&=&\Lambda(\omega)
\end{eqnarray*}
\end{proof}
Let $H$ be a filter relative to $m$ and $\alpha^*.$
In \cite{BFMP1} it was shown that
the operator $S_H$ fails to be
a pure isometry if and only if it has an eigenvector,
i.e., if and only if there exists an element $F\in \bigoplus L^2(\sigma_i)$
and a complex number $\lambda$
for which $H^t(\omega) F(\alpha^*(\omega)) = \lambda F(\omega),$
where $|\lambda|=1 = \|F(\omega)\|$
for almost all $\omega.$  Motivated by this result, we make the following definition:
\begin{definition}
A filter $H$ is called an {\bf eigenfilter} if there exists a constant $\lambda$ with $|\lambda|=1$ such that for almost all $\omega$, $H_{1,1}(\omega)=\lambda$ and $H_{1,j}(\omega)=0$ for $j>1$.
\end{definition}
\par
Using this definiton, we have the following restatement of the result from \cite{BFMP1}:
\begin{prop}
\label{pure}
$S_H$ fails to be a pure isometry if and only if
$H$ is equivalent to an eigenfilter.
\end{prop}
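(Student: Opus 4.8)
The plan is to deduce both directions from the characterization recalled just above from \cite{BFMP1}: $S_H$ fails to be a pure isometry if and only if there are $F\in\bigoplus L^2(\sigma_i)$ and $\lambda\in\C$ with $|\lambda|=1=\|F(\omega)\|$ for a.e.\ $\omega$ and $H^t(\omega)F(\alpha^*(\omega))=\lambda F(\omega)$, i.e.\ $S_HF=\lambda F$. Both directions then amount to passing between such an eigenvector $F$ and a measurable, fiberwise change of orthonormal basis on $\C^{m(\omega)}$ that carries $F(\omega)$ to the first coordinate vector $e_1$. We may assume $m\geq 1$ a.e., since if $m=0$ a.e.\ both statements are vacuously false (an eigenfilter forces $H_{1,1}=\lambda\neq 0$, hence $\sigma_1$, and therefore $\{m\geq1\}$, to have full measure).

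For the easy direction, suppose $H$ is an eigenfilter. Then $\omega\mapsto e_1$ is a constant section in $\bigoplus L^2(\sigma_i)$ (as $\sigma_1$ has full measure), and since the first row of $H(\omega)$ is $(\lambda,0,0,\dots)$ we have $H^t(\omega)e_1=\lambda e_1$, so by \cite{BFMP1} $S_H$ is not pure. If $H$ is merely equivalent to an eigenfilter $H'$, say $H'(\omega)=A(\alpha^*(\omega))H(\omega)A^*(\omega)$ with $A$ of the block-unitary form of Definition \ref{equivfilt}, I would note that multiplication by the fiberwise-unitary function $A^t$ intertwines $S_{H'}$ and $S_H$: the identity $H^t(\omega)A^t(\alpha^*(\omega))=A^t(\omega)(H')^t(\omega)$ follows in one line from the equivalence relation and the fact that $H(\omega)A^*(\omega)A(\omega)=H(\omega)$. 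Hence $S_H$ is unitarily equivalent to $S_{H'}$, so it too fails to be a pure isometry.

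For the converse, assume $S_H$ is not pure and fix $F,\lambda$ as above. The idea is to rotate $F(\omega)$ into the first coordinate: for a.e.\ $\omega$ choose a unitary matrix $A_1(\omega)$ of size $m(\omega)$ whose first row is $F(\omega)^t$ (complete $\{\overline{F(\omega)}\}$ to an orthonormal basis of $\C^{m(\omega)}$ and take conjugates, or use an explicit Gram--Schmidt/Householder formula), and let $A(\omega)$ be $A_1(\omega)$ padded with zeros to the block form of Definition \ref{equivfilt}. By the lemma above showing that $A(\alpha^*(\cdot))H(\cdot)A^*(\cdot)$ is again a filter relative to $m$ and $\alpha^*$, the function $H'(\omega):=A(\alpha^*(\omega))H(\omega)A^*(\omega)$ is a filter, and $H$ is equivalent to it by construction. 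It then remains to compute the first row $e_1^tH'(\omega)$: by the choice of $A(\alpha^*(\omega))$ this equals $F(\alpha^*(\omega))^tH(\omega)A^*(\omega)$, which is $\lambda F(\omega)^tA^*(\omega)$ by the transpose of the eigenvector relation; and by the choice of $A(\omega)$ and orthonormality of the rows of $A_1(\omega)$ this in turn equals $\lambda e_1^t$. So the first row of $H'(\omega)$ is $(\lambda,0,0,\dots)$ for a.e.\ $\omega$, i.e.\ $H'$ is an eigenfilter.

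The only step here that is not pure bookkeeping is the Borel choice of the fiberwise unitaries $A_1(\omega)$ carrying $F(\omega)$ to $e_1$, which I would handle by a standard measurable-selection argument (the relevant set-valued map is closed-valued and measurable, with nonempty values once $\|F(\omega)\|=1$); everything else is routine linear algebra whose only pitfall is keeping the transposes and complex conjugates straight.
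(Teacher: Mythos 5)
Your proof is correct and follows essentially the same route as the paper's: both directions reduce to the eigenvector characterization of \cite{BFMP1}, with the converse obtained by conjugating $H$ by a fiberwise unitary whose first row is $F(\omega)$. The only (harmless) differences are cosmetic: you get the forward direction by intertwining $S_H$ with $S_{H'}$ rather than reading the eigenvector off the first row of $A$ directly, you compute the whole first row of $H'$ by hand where the paper invokes the filter equation to kill the entries $H'_{1,j}$, $j>1$, and you flag the measurable-selection and $m\equiv 0$ issues that the paper leaves implicit.
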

\begin{proof}
If there exists a matrix-valued function $A$ such that
\[
H'(\omega) A(\omega) = A(\alpha^*(\omega)) H(\omega),
\]
where $H'(\omega)$ is an eigenfilter,
then, computing the first rows of both sides, we see that
the first row of $A$ is the desired eigenvector $F.$
\par
Conversely, if $S_H$ has an eigenvector $F,$
build a unitary-valued matrix $A(\omega)$ having $F(\omega)$
as its first row.
Set $H'(\omega) = A(\alpha^*(\omega)) H(\omega) A^*(\omega).$
By the previous lemma, $H'$ is a filter relative to
$m$ and $\alpha^*.$
Moreover, one can see that $H'_{1,1} = \lambda.$
Because $H'$ is a filter, it follows that the elements $H'_{1,j}(\omega) $ are all 0 for $j>1.$
Hence, $H'$ has the desired form.
\end{proof}
\par
Now, let $S$ be the set of all pairs $(m,H),$ where
$m$ is a multiplicity function and $H$ is a filter relative to $m$ and $\alpha^*.$
Let $S_0$ be the subset of $S$ comprising those pairs
$(m,H)$ for which $H$ is  equivalent to an eigenfilter,
and let $S_1=S\setminus S_0.$
\par
Finally let $E = S_1/\equiv$ be the set of
equivalence classes of $S_1$
with respect to the equivalence relation
$(m_1,H_1) \equiv (m_2,H_2)$ if $m_1=m_2$ and $H_1$ is equivalent to $H_2.$
\begin{theorem}
The set $E$ is a classifying set for the
equivalence classes of GMRAs (with finite multiplicity functions and associated measures absolutely continuous with respect to Haar measure),
in the sense that there is a 1-1 correspondence between $E$ and the classes of GMRAs,
and this correspondence can be described explicitly.
\end{theorem}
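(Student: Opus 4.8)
The plan is to make the correspondence completely explicit and then verify it is a well-defined bijection, using the results of this section together with the construction of Section \ref{construction}. To a GMRA $\{V_j\}$ (with finite multiplicity function $m$ and associated measure absolutely continuous with respect to Haar) I assign the pair $(m,H)$, where $H$ is a filter constructed from $\{V_j\}$ using some Stone's theorem map $J$. Since the GMRA axiom $\cap V_j=\{0\}$ forces $S_H=J\circ\delta^{-1}\circ J^{-1}$ to be a pure isometry, Proposition \ref{pure} shows that $H$ is not equivalent to an eigenfilter, so $(m,H)\in S_1$. One first checks that Definition \ref{equivfilt} does define an equivalence relation on the filters relative to a fixed $m$ (symmetry by replacing $A$ with $A^*$, transitivity by composing the matrix-valued functions), so that $E=S_1/\equiv$ makes sense; and then that the class $[(m,H)]\in E$ is independent of the choice of $J$, which is the content of the remark following Definition \ref{equivfilt}: two filters constructed via $J$ and $J'$ are intertwined by the multiplication operator $A=J'J^{-1}$, which has the block form required by Definition \ref{equivfilt}. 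This produces a map $\Phi$ on GMRAs; it remains to show that $\Phi$ descends to equivalence classes and that the induced map $\overline\Phi$ on classes is injective and surjective onto $E$.

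Well-definedness on classes and injectivity are both immediate consequences of Theorems \ref{equiv1} and \ref{equiv2}. If $\{V_j\}$ has data $(m,H)$ and $\{V'_j\}$ has data $(m',H')$, choose complementary filters $G$ and $G'$; by Theorem \ref{equiv1} we have $\{V_j\}\equiv\{V_j^{m,H,G}\}$ and $\{V'_j\}\equiv\{V_j^{m',H',G'}\}$. If $\{V_j\}\equiv\{V'_j\}$, then $\{V_j^{m,H,G}\}\equiv\{V_j^{m',H',G'}\}$, and Theorem \ref{equiv2} yields $m=m'$ and $H$ equivalent to $H'$; hence $\Phi$ is constant on equivalence classes and descends to $\overline\Phi$. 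Conversely, if $\overline\Phi([\{V_j\}])=\overline\Phi([\{V'_j\}])$, so that $m=m'$ and $H$ is equivalent to $H'$, then Theorem \ref{equiv2} gives $\{V_j^{m,H,G}\}\equiv\{V_j^{m,H',G'}\}$, and so $\{V_j\}\equiv\{V'_j\}$; thus $\overline\Phi$ is injective.

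For surjectivity, let $[(m,H)]\in E$, so $H$ is not equivalent to an eigenfilter. By Proposition \ref{pure}, $S_H$ is a pure isometry. Choose any complementary filter $G$ to $H$ (one exists by \cite{BCM}) and form the canonical GMRA $\{V_j^{m,H,G}\}$ of Theorem \ref{gmraconstr}. Part (4) of that theorem states that its associated multiplicity function is $m$ and that $H$ is a filter constructed from it, so $\overline\Phi([\{V_j^{m,H,G}\}])=[(m,H)]$. Hence $\overline\Phi$ is a bijection, and it is explicit: a GMRA class is sent to its multiplicity function together with the equivalence class of its constructed filter, and the inverse sends $[(m,H)]$ to the class of the canonical GMRA $\{V_j^{m,H,G}\}$, which by the Corollary to Theorem \ref{equiv2} does not depend on the choice of $G$.

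The main point requiring care is not any single computation --- all the analytic work has been isolated in Theorems \ref{gmraconstr}, \ref{equiv1}, and \ref{equiv2} --- but rather the well-definedness of $\Phi$ at the level of GMRAs: that the filter equivalence class is independent of $J$, and, more essentially, that the pair $(m,H)$ always lies in $S_1$ and never in $S_0$. This last point is exactly where the triviality of $\cap V_j$ in the GMRA axioms enters, through the pure-isometry criterion of Proposition \ref{pure}; dropping it would break the bijection, since pairs in $S_0$ correspond to no GMRA at all.
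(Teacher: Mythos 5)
Your proof is correct and takes essentially the same route as the paper's, which simply defines the inverse map $\kappa$ from $E$ to GMRA classes via the canonical construction and invokes Theorem \ref{equiv2} for well-definedness and injectivity and Theorem \ref{equiv1} for surjectivity. You run the same bijection in the opposite direction and additionally spell out two points the paper leaves implicit: that a filter constructed from a GMRA always lies in $S_1$ (via purity of $S_H$ and Proposition \ref{pure}) and that the resulting class in $E$ does not depend on the choice of the Stone's theorem map $J$.
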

\begin{proof}
Given an element $s\in E,$ let $(m,H)$ be a representative of the equivalence class $s.$
Let $G$ be a filter complementary to $H,$ and define $\kappa(s)$ to be the equivalence class of the GMRA $\{V^{m,H,G}\}$.
By Theorem \ref{equiv2}, the map $\kappa$ is both well defined and one-to-one, and by Theorem \ref{equiv1}, it is onto.
\end{proof}
   
\section{Examples}
\label{examples}
We will now use the technique outlined in Section \ref{construction} to construct examples of canonical GMRAs, and apply the ideas of the Section \ref{equivalence} to discuss their equivalence.
We work first in the classical setting of MRAs (so $m\equiv 1$) with single wavelets (so $\widetilde{m}\equiv 1$) for dilation by 2 in $L^2(\mathbb R)$.
\begin{example}
\label{MRAdil2}
Any MRA for dilation by 2 in $L^2(\mathbb R)$ with $m=\widetilde{m}\equiv 1$ has 
canonical Hilbert space 
\begin{equation}
\label{dil2}
L^2(\mathbb T)\oplus L^2(\mathbb T)\oplus \left(\bigoplus_{j=1}^{\infty} L^2(2^j\mathbb T)\right)=V^{m,H,G}_0\oplus W^{m,H,G}_0\oplus\left (\bigoplus_{j=1}^{\infty}W^{m,H,G}_j\right)
\end{equation}
with $\pi_n(\bigoplus f_l)=e_n(\bigoplus f_l),$ where $e_n(x)=e^{2\pi i n x}$, and
\begin{eqnarray}
\label{candil2}
\lefteqn{\delta^{-1}(f_{V_0}\oplus f_{W_0}\oplus \left(\bigoplus_{j=1}^{\infty} f_{W_j}\right)}\\
\nonumber&=&\left(h(\omega)f_{V_0}(2\omega)+g(\omega)f_{W_0}(2\omega)\right)\oplus\sqrt{2}f_{W_1}(2\omega)\oplus\left(\bigoplus_{j=2}^{\infty}\sqrt 2 f_{W_j}(2\omega)\right).
\end{eqnarray}
Equivalence for two different MRAs with single wavelets is equivalent to the existence of a period 1 function $a$ such that $|a(\omega)|=1$ and $h'(\omega)=a(2\omega)h(\omega)\overline{a(\omega)},$ where $h$ and $h'$ are filters constructed from the two MRAs. Thus, in particular, equivalence requires that $|h|=|h'|.$  However, this is not sufficient, as we will see below.  Determining which filters give equivalent MRAs requires determining exactly which functions on the 1-torus are coboundaries where cohomological equivalence is given by Definition \ref{equivfilt}.
\par
For the Shannon MRA, with $\widehat{V}_0=L^2([-\frac12,\frac12])$, we have 
$h=\sqrt2\chi_{[-\frac14,\frac 14]}$ and 
 $g=\sqrt{2}\chi_{\pm[\frac14,\frac12]}$ in the above formula.
  By mapping $W_j^{m,H,G}\mapsto L^2(\pm 2^j[\frac12,1])$, we can map this canonical GMRA to the Fourier transform of the Shannon GMRA.
 \par
 For the Haar MRA, with  $V_0$ spanned by translates of $\chi_{[0,1]}$, we have $h=\frac1{\sqrt2}(1+e_{-1})$, $g=\frac1{\sqrt2}(e_{-1}-1)$ in the above formula.
  Here there is no obvious mapping between the canonical GMRA and either the original or its Fourier transform.
  However, we know all three are equivalent by Theorem \ref{equiv1}.
 \par
 In either the Shannon  or Haar examples, we can switch the roles of $h$ and $g$ to get a new MRA that cannot be realized in $L^2(\mathbb R)$ (since iterating the refinement equation (\ref{refine}) leads to a scaling function that must be identically 0).
  The canonical Hilbert space will still be given by Equation (\ref{dil2}), and the operators $\pi_{n}$ will be as above.
  However, in the dilation formula (\ref{candil2}), we will now have $h$ given by the old $g$, and $g$ by the old $h$.
   Proposition \ref{pure} shows that we still have $S_h$ a pure isometry, so that the canonical construction does produce a GMRA.
  Looking at the $V^{m,h,g}_{-j}$ and $W^{m,h,g}_{-j}$ that result in the case of the reversed Shannon GMRA shows how this example differs from Shannon MRA itself: 
$$\delta^{-1}:V_0^{m,h,g}=L^2\left(\left[-\frac12,\frac12\right]\right)\mapsto L^2\left(\pm\left[\frac14,\frac12\right]\right)\mapsto L^2\left(\pm\left[\frac38,\frac12\right]\right)\mapsto\cdots$$
$$\delta^{-1}:W_0^{m,h,g}=L^2\left(\left[-\frac12,\frac12\right]\right)\mapsto L^2\left(\left[-\frac14,\frac14\right]\right)\mapsto L^2\left(\pm\left[\frac14,\frac38\right]\right)\mapsto\cdots$$
\par
Since the absolute values of the filters in the three examples discussed here are all different on sets of positive measure, the three are seen to be inequivalent MRAs.  To see that for MRAs with wavelets, the filters having equal absolute value almost everywhere is not sufficient for equivalence, consider the MRA built from $h'=-h$, where $h$ is the filter for the Haar example.  A simple Fourier analysis argument shows that there is no solution to  $h'(\omega)=a(2\omega)h(\omega)\overline{a(\omega)},$ so this MRA must be inequivalent to the Haar MRA.  We note that it has the same canonical Hilbert space as Haar, and the same subspaces $V_j,$ but its dilation on $V_0$ is the negative of the Haar dilation.  This negative sign causes problems in the iteration of the refinement equation, so this example cannot be realized in $L^2(\mathbb R)$.  
\par A third example in this setting begins with the Cohen filters $h = \frac{1}{\sqrt{2}}(1 + e_{-3})$ and $g = \frac{1}{\sqrt{2}}(1 - e_{-3})$.  The infinite product construction which follows from the refinement equation in $L^2(\mathbb R)$ yields the functions $\phi = \frac{1}{3}\chi_{[0,3)}$ and $\psi = \frac{1}{3} (\chi_{[0, \frac{3}{2})} - \chi_{[\frac{3}{2}, 3)})$, which fail to be an orthonormal scaling function and orthonormal wavelet, respectively, since neither has orthonormal translates.  However, it can be shown that the negative dilate space (for dilation by 2) of the Cohen Parseval wavelet coincides with that of the Haar orthonormal wavelet.  Hence, the Cohen GMRA equals the Haar MRA.
\par We may apply Theorem \ref{gmraconstr} to the Cohen filters and multiplicity functions $m\equiv 1$, $\widetilde m\equiv 1$.  The canonical Hilbert space will be that given by Equation (\ref{dil2}), on which the the integers act by multiplication by exponentials.  We see that the spaces $V_j$ for the canonical Cohen GMRA are the same as those for the canonical Haar MRA whenever $j\geq 0$.  However, since $\frac{1}{\sqrt{2}}(1 + e_{-3})$ and $\frac{1}{\sqrt{2}}(1 + e_{-1})$ have different moduli, the two filters must be inequivalent.  Therefore the two canonical GMRAs must be inequivalent.  
\par Lastly, we remark that while the Cohen wavelet $\psi = \frac{1}{3} (\chi_{[0, \frac{3}{2})} - \chi_{[\frac{3}{2}, 3)})$ is only a Parseval wavelet in $L^2(\R)$, the element $(0, \chi_{[-\frac12,\frac12)}, 0, 0, 0, \ldots)$ is an orthonormal wavelet for the canonical Hilbert space (\ref{dil2}), with respect to $\pi_n$ and $\delta$ defined by Equation (\ref{candil2}) using the Cohen filters.    Dutkay et. al. (\cite{BDP},\cite{DJ2}) also produced an orthonormal wavelet from the Cohen filter, using a "super-wavelet" construction.  The associated GMRA in $L^2(\mathbb R)\oplus L^2(\mathbb R)\oplus L^2(\mathbb R)$  can be seen to be equivalent to our canonical Cohen GMRA by defining the map $P$ in Lemma \ref{v0w0} in the natural way to take the $n^{th}$ translate of the Dutkay scaling function to $e^{2\pi i n x}$ in the canonical $V_0$.     Of course, this example cannot be realized in $L^2(\R)$.
 \end{example} 
Next, we consider two non-MRA examples for dilation by 2 in $L^2(\mathbb R)$:  the Journ\'{e} GMRA, and the example for the Journ\'{e} multiplicity function with low-pass filter of rank $a=2$ described in \cite{IJKLN}, Example 13.
  As is noted there, a GMRA cannot be constructed for this second example using the infinite product construction.
  However, by Proposition \ref{pure}, the construction of this paper can be carried out to give such a GMRA.
\begin{example}
Let $m$ be  the multiplicity function corresponding to the Journ\'e wavelet:
 \[
m(x)=\begin{cases}2&\text{if $x\in [-\frac{1}{7},\frac{1}{7})$}\\
1&\text{if  $x\in \pm [\frac{1}{7},\frac{2}{7})\cup \pm[\frac{3}{7},\frac{1}{2})$}\\
0&\text{otherwise,}
\end{cases}
\]
so $\sigma_1=[-\frac12,-\frac37]\cup[-\frac27,\frac27]\cup[\frac37,\frac12]$ and $\sigma_2=[-\frac17,\frac17]$.
 Since we know the Journ\'{e} GMRA has an associated single orthonormal wavelet, $\widetilde{m}\equiv 1$.
 
Filters that give rise to the Journ\'{e} wavelet via the infinite product construction are described in \cite{Cou}, \cite{BCM}, and \cite{BJMP}.
  In particular, we may take 
$$H=\left(\begin{array}{ll}\sqrt{2}\chi_{[-\frac27,-\frac14]\cup[-\frac17,\frac17)\cup[\frac14,\frac27]}&0\\\sqrt2\chi_{[-\frac12,-\frac37]\cup[\frac37,\frac12]}&0\end{array}\right)\mbox { and }$$
$$G=\left(\begin{array}{ll}\sqrt2\chi_{[-\frac14,-\frac17]\cup[\frac17,\frac14]}&\sqrt2\chi_{[-\frac17,\frac17]}\end{array}\right)$$
Here $V_0^{m,H,G}=L^2(\sigma_1)\oplus L^2(\sigma_2)$, and $W_j^{m,H,G}=L^2(2^j\mathbb T)$, $j\geq0$.
This canonical GMRA can be mapped to the usual Journ\'{e} GMRA by integrally translating $\sigma_1$ and $\sigma_2$ to the scaling set to form $V_0$, and $\mathbb T\equiv[-\frac12,\frac12]$ to the wavelet set to form $W_0$.
\par   In \cite{IJKLN}, an alternative filter $H'$ for the same multiplicity function, but which satisfies the low-pass condition of rank $a=2$ is constructed:
\[
h'_{1,1}=\sqrt{2}\chi_{[-\frac{2}{7},-\frac{1}{4})\cup [-\frac{1}{7},\frac{1}{7})\cup [\frac{1}{4}, \frac{2}{7})},\quad  h'_{1,2}=h'_{2,1}= 0, \quad \text{and}\quad h'_{2,2}=\sqrt{2}\chi_{[-\frac{1}{14},\frac{1}{14})}.
\] 
By partitioning $\mathbb R/\mathbb Z$ as described in \cite{Cou} and \cite{BCM}, we can build the following complementary filter $G'$:
\[
g'_{1,1}=\sqrt{2}
\chi_{[-\frac12,-\frac37)\cup[-\frac14,-\frac17)\cup[\frac17,\frac14)\cup[\frac37,\frac12)},\quad
g'_{1,2}=\sqrt{2}\chi_{[-\frac17,-\frac1{14})\cup[\frac1{14},\frac17)}.
\]
The spaces $V_0^{m,H',G'}$ and $W_j^{m,H',G'}$ for $j\geq 0$ are the same as those for the canonical GMRA corresponding to the standard Journ\'{e} filters.  However, the different filters in the rank 2 example will change the dilation, and thus change the spaces $V_{j}^{m,H',G'}$ and $W_{j}^{m,H',G'}$ for $j<0$.  For example, we have 
$V_{-1}^{m,H',G'}=L^2\left(\left[-\frac17,\frac 17\right]\cup\pm\left[\frac14,\frac27\right]\right)\oplus L^2\left(\left[-\frac1{14},\frac1{14}\right]\right),$ while the standard $V_{-1}^{m,H,G}=L^2\left(\left[-\frac17,\frac 17\right]\cup\pm\left[\frac14,\frac27\right]\cup\pm\left[-\frac3{7},\frac1{2}\right]\right)\oplus 0.$
The fact that all the $V_{-j}^{m,H',G'}$ allow nonzero second components with support overlapping that of the first component suggests the impossibility of mapping the rank 2 example into $L^2(\mathbb R$) as we mapped the standard example.  Indeed, since iterating the refinement equation would lead to a scaling function with a degenerate multiplicity function (\cite{IJKLN}), the rank 2 example cannot be realized in $L^2(\mathbb R)$.  Thus, these two examples must not be equivalent.  
\end{example}
For our next example, we consider dilation by 3, both in $L^2(\mathbb R)$ and in the Dutkay/
\\Jorgensen enlarged Cantor fractal space (\cite{DJ}).
\begin{example}
The MRA Haar 2-wavelet for dilation by 3 in $L^2(\mathbb R)$  has canonical Hilbert space  
$$L^2(\mathbb T)\oplus \left(L^2(\mathbb T)\oplus L^2(\mathbb T)\right)\oplus\left(\bigoplus_{j=1}^{\infty} L^2(3^j\mathbb T)\oplus L^2(3^j\mathbb T)\right).
$$
The canonical $\delta^{-1}=S_h\oplus \left(S_{g_1}\oplus S_{g_2}\right)\oplus\left(\bigoplus_{j=1}^{\infty}\mathcal D^{-j}\right)$, where
 $$h=\frac1{\sqrt 3}(1+e_1+e_2),\quad g_1=\frac1{\sqrt 2}(e_1-e_2)\mbox { and }g_2=\frac1{\sqrt 6}(-2+e_1+e_2),$$
 and $\mathcal D^{-j}(f_1\oplus f_2)(\omega)= \sqrt 3^j (f_1\oplus f_2)(3^j\omega).
$
\par
 The Cantor set MRA has the same canonical GMRA except with
 $$h=\frac 1{\sqrt2}(1+e_2),\quad g_1=e_1 \mbox{ and }g_2=\frac1{\sqrt2}(1-e_2).
$$
These two examples must be inequivalent since their h"s have different absolute values.  The latter cannot be realized in $L^2(\mathbb R)$, since $h(0)=\sqrt 2$, so that the iterated refinement equation (\ref{refine}) would again force the scaling function to be identically 0.
     
\end{example}
Our final example uses a group $\Gamma$ different from $\mathbb Z^d$.
\begin{example}
Let
$\Gamma_j = \bigoplus_{i=j}^{\infty}
[\Z_2]_i=\bigoplus_{i=j}^{\infty}\{1,-1\}_i,$ embedded as a subgroup of
$D=\Gamma_{-\infty}=\bigoplus_{i=-\infty}^{\infty} [\Z_2]_i$ by
$\Gamma_j=\bigoplus_{i=-\infty}^{j-1}\{1\}_i\oplus
\bigoplus_{i=j}^{\infty} [\Z_2]_i.$  Let $\alpha$ be defined on
$\Gamma_0$ by $\alpha(\gamma)_n = \gamma_{n-1}$ for $n>0$ and
$\alpha(\gamma)_0 = 1.$  Let $\mathcal H = l^2(D),$ and let
$\pi$ be the restriction to $\Gamma_0$ of the regular representation of
$D.$   Define $S$ on $D$ by $[S(d)]_n = d_{n-1},$
and note that $S(\gamma) \equiv  \alpha(\gamma)$ for
$\gamma\in\Gamma_0.$
Define $\delta$ on $\mathcal H = l^2(D)$ by
$[\delta(f)](d) = f(S(d)),$
and note that
 $\delta^{-1}\pi_\gamma \delta = \pi_{\alpha(\gamma)}.$
 \par  We have 
$\Gamma_{j+1}\subseteq \Gamma_j,$ and
$\cap_{j=-\infty}^{\infty}\Gamma_j=\{e_D\}$, 
so that $l^2(\Gamma_{j+1})\subseteq l^2(\Gamma_j)$ and
$\cap_{j=-\infty}^{\infty}l^2(\Gamma_j)=l^2(\{e_D\}),$ where
$e_D=(\cdots,\;1,\;1,\;1,\;\cdots)$ denotes the additive identity
element of $D=\Gamma_{-\infty}.$ 
 If we let $V_j=l^2(\Gamma_{-j}),$ then $\{V_j\}$ is almost a GMRA.
It fails only because constant multiples of the function
$\chi_{\{e_D\}}$ belong to $\cap V_j.$  We will make it into a GMRA by
tensoring it with the dilation by 2 Haar GMRA.  It is known (as in
\cite{JM})
that the tensor product of two GMRAs gives a GMRA.  By tensoring our
almost GMRA with an actual one, we will preserve all the properties of
the almost GMRA, and eliminate the non-trivial intersection.   
\par Accordingly, let $\Gamma'=\mathbb Z$ act in $\mathcal
H'=L^2(\mathbb R)$ by $\pi'_nf(x)=f(x-n)$, and let $\delta'f(x)=\sqrt2
f(2x)$.  We have $\alpha'$ acting on $\Gamma'$ by $\alpha'(n)=2n$.
  Write $\{V'_j\}$ for the usual Haar GMRA that results from taking
$V'_0$ to be the closed linear span of translates of $\chi_{[0,1]}$.
  Set $\mathcal H''=\mathcal H\otimes\mathcal H'$, equipped with the
representation $\pi\times \pi'$ of $\Gamma''=\Gamma_0\times\Gamma'$ and
the operator $\delta\otimes\delta'$.  We let
$\alpha''=\alpha\times\alpha'$ and note that $\alpha''^*$ acts on
$\widehat{\Gamma''}=\prod_{i=0}^{\infty} [\Z_2]_i\times \mathbb T$ by
$\alpha''^*((\omega_0,\omega_1,\omega_2,\cdots)\times
x)=(\omega_1,\omega_2,\cdots)\times 2x$, where we parameterize $\mathbb
T$ by $[-\frac 12,\frac 12).$  We have $N=4,$ and
$\ker(\alpha''^*)=(\{-1,1\}\times \prod_{i=1}^{\infty}
[\{1\}]_i)\times\{0,\frac12\}$.  To build the dilation described in
Section \ref{construction}, we can take the cross section 
 $c((\omega_0,\omega_1,\omega_2,\cdots)\times
x)=(1,\omega_0,\omega_1,\cdots)\times \frac x2.$ We have $m\equiv1$ and
$\widetilde{m}\equiv3$, so
$\sigma_1=\widetilde\sigma_1=\widetilde\sigma_2=\widetilde\sigma_3=\prod
_{i=0}^{\infty} [\Z_2]_i\times \mathbb T.$  

 We define our filter $H=h_1\otimes h_2,$ where $h_1$ is the filter on
$\prod_{i=0}^{\infty} [\Z_2]_i$ given by
$h_1=\sqrt{2}\chi_{\{1\}_0\times \prod_{i=1}^{\infty} [\Z_2]_i},$ and
$h_2$ is the low-pass filter for the Haar GMRA described in Example
\ref{MRAdil2}, that is $h_2=\frac1{\sqrt2}(1+e_{-1})$.  For our filter
complementary to $H,$ we define  $g_1=\sqrt{2}\chi_{\{-1\}_0\times
\prod_{i=1}^{\infty} [\Z_2]_i}$ and let
 $g_2$ be the high-pass filter for the Haar GMRA,
$g_2=\frac1{\sqrt2}(e_{-1}-1)$.  We then take our complementary filter
$G$ to be the matrix whose rows are $h_1\otimes g_2$, $g_1\otimes h_2$,
and $g_1\otimes g_2$.  

For an alternative GMRA, 
we can replace $h_1$ by $h'_1=\chi_{\{1\}_0\times \prod_{i=1}^{\infty}
[\Z_2]_i} -
\chi_{\{-1\}_0\times \prod_{i=1}^{\infty} [\Z_2]_i}$
and $g_1$ by $g'_1=-\chi_{\{1\}_0\times \prod_{i=1}^{\infty} [\Z_2]_i}+
\chi_{\{-1\}_0\times \prod_{i=1}^{\infty} [\Z_2]_i}.$
These could be viewed as more fractal-like when combined with the $h_2$
and $g_2$ in the standard
tensor product construction.
\end{example}



\begin{thebibliography}{15}
\bibitem{BCM} L. W.  Baggett, J. E. Courter and K. D. Merrill, The construction of wavelets from generalized conjugate mirror filters in $L^2(\R^n)$, \emph{Appl. Comput. Harmon. Anal.} \textbf{13} (2002), 201--223.
\bibitem{BFMP1} L. W. Baggett, V. Furst, K. D. Merrill and J. A. Packer, Generalized filters, the low-pass condition, and connections to multiresolution analysis,  \emph{J. Func. Anal.}. \textbf {257} (2009), 2760--2779.
\bibitem{BJMP} L. W. Baggett, P. E. T. Jorgensen, K. D. Merrill and J. A. Packer, Construction of Parseval wavelets from redundant filter systems, \emph{J. Math. Phys.} \textbf{46}  (2005), \#083502, 1--28.
\bibitem{BMM} L. W. Baggett, H. A. Medina, and K. D. Merrill, \textit{Generalized multi-resolution analyses and a construction procedure for all wavelet sets in $\mathbb R^n$},  J. Fourier Anal. Appl. \textbf{5} (1999), 563--573.
\bibitem{IJKLN}L. W. Baggett, N. S. Larsen, K. D. Merrill, J. A. Packer and I. Raeburn, Generalized multiresolution analyses with given multiplicity functions, \emph{J. Fourier Anal. Appl.}, published online May 2008; arXiv.math:0710.2071.
\bibitem{AIJLN}L. W. Baggett, N. S. Larsen, J. A. Packer,  I. Raeburn and A. Ramsay, Direct limits, multiresolution analyses, and wavelets, preprint; arXiv.math:0809.0500.
\bibitem{BM} L. W. Baggett and K. D. Merrill, Abstract harmonic analysis and wavelets in $\R^n$, in \emph{The Functional and Harmonic Analysis of Wavelets and Frames}, \emph{Contemp. Math.}, {\bf 247}, Amer. Math. Soc., Providence, 1999, 17--27.
\bibitem{Be} R. Beals, Operators in function spaces which commute with multiplications, \emph{Duke Math. J.}, \textbf{35}, 353--362. 
\bibitem{BDP} S. Bildea, D. Dutkay, and G. Picioroago, MRA super-wavelets, \emph{New York J. Math.} \textbf{11} (2005), 1--19.
\bibitem{BRS} M. Bownik, Z. Rzeszotnik, and D. Speegle, A characterization of dimension functions of wavelets. \emph{Appl. Comput. Harmon. Anal.}  \textbf{10}  (2001), 71--92.
\bibitem{BJ}O. Bratteli and P. E. T. Jorgensen, Isometries, shifts, Cuntz algebras and multiresolution analyses of scale $N$, \emph{Integral Equations \& Operator Theory} \textbf{28} (1997), 382--443.
\bibitem {BJ2} O. Bratteli and P. Jorgensen, ``Wavelets Through the Looking Glass",
Birk\"{a}user: Boston, Basel, Berlin, 2002.
\bibitem{Cou} J. Courter, Construction of dilation-$d$ wavelets,  in \emph{The Functional and Harmonic Analysis of Wavelets and Frames}, \emph{Contemp. Math.}, \textbf {247}, Amer. Math. Soc., Providence, 1999, 183--205.
\bibitem{DMP}
J. D'Andrea, K.D. Merrill and J.A. Packer, Fractal wavelets of Dutkay-Jorgensen type for the Sierpinski gasket space, in \emph{Frames and Operator Theory in Analysis and Signal Processing, Contemp. Math.}, \textbf{451}, Amer. Math. Soc., Providence, 2008, 69--88.
\bibitem {Da} I. Daubechies, Ten Lectures on Wavelets, \emph{CBMS-NSF Lecture Notes}, no. 61, SIAM, 1992.
\bibitem{DJ}D. Dutkay and P. E. T. Jorgensen, Wavelets on fractals,\emph{Rev. Math. Iberoamericana} \textbf{22} (2006), 131--180.
\bibitem{DJ2} D. Dutkay and P. E. T. Jorgensen, Fourier series on fractals: a parallel with wavelet theory, to appear in \emph{Mathematische Zeitschrift}.
 \bibitem{JM} S. Jaffard and Y. Meyer, Les ondelettes, in \emph{Harmonic analysis and partial
differential equations (El Escorial,1987)}, Lecture Notes in Math., 1384, Springer, Berlin, 1989,182--192.
\bibitem{IN} N. S. Larsen and I. Raeburn, From filters to wavelets via direct limits, in \emph{Operator Theory, Operator Algebras and Applications}, \emph{Contemp. Math.}, \textbf{414}, Amer. Math. Soc., Providence, 2006, 35--40.
\bibitem {Law} W. Lawton, \textit{Tight frames of compactly supported affine wavelets},  J. Math. Phys.
 \textbf{31} (1990), 1898--1901.
\bibitem{Ma} S. G. Mallat, Multiresolution approximations and wavelet orthonormal bases of $L^2(\R)$, \emph{Trans. Amer. Math. Soc.} \textbf{315} (1989), 69--87.
\bibitem {Me} Y. Meyer, ``Wavelets and Operators", {Cambridge Studies in Advanced Mathematics} {v.~37}, Cambridge University Press, Cambridge, England, 1992.
\end{thebibliography}
\end{document}